\theoremstyle{plain}
  \newtheorem{thm}{Theorem}
  \newtheorem{defn}{Definition}
  \newtheorem{prop}{Proposition}
  \newtheorem{cor}{Corollary}
\theoremstyle{definition}
  \newtheorem{example}{Example}
  \newtheorem*{rem}{Remark}
\newcommand{\mf}{\mathfrak}
\newcommand{\on}{\operatorname}
\newcommand{\g}{\mathfrak{g}}
\newcommand{\h}{\mathfrak{h}}
\newcommand{\Hom}{\operatorname{Hom}}
\newcommand{\la}{\langle}
\newcommand{\ra}{\rangle}
\newcommand{\half}{\frac{1}{2}}
\newcommand{\cat}{\mathscr}
\newcommand{\Mod}{\text{-}\mathrm{Mod}}
\begin{document}

\title[On quantization of Poisson-Lie groups and moduli spaces]{On deformation quantization of Poisson-Lie groups and moduli spaces of flat connections}
\author{David Li-Bland}
\author{Pavol \v{S}evera}

\address{Department of Mathematics, University of California, Berkeley}
\email{libland@math.berkeley.edu}
\address{Department of Mathematics, Universit\'{e} de Gen\`{e}ve, Geneva, Switzerland}
\email{pavol.severa@gmail.com}
\thanks{D.L-B. was supported by the National Science Foundation under Award No. DMS-1204779.
\\
P.\v S. was supported in part by  the grant MODFLAT of the European Research Council and the NCCR SwissMAP of the Swiss National Science Foundation..}

\begin{abstract}
We give simple explicit formulas for deformation quantization of Poisson-Lie groups and of similar Poisson manifolds which can be represented as moduli spaces of flat connections on surfaces. The star products depend on a choice of Drinfeľd associator and are obtained by applying certain monoidal functors (fusion and reduction) to commutative algebras in Drinfeľd categories. From a geometric point of view this construction can be understood as a quantization of the quasi-Poisson structures on moduli spaces of flat connections.

\end{abstract}
\maketitle

\section{Introduction}

This note is based on the observation made in \cite{LiBland:2012vo} that Poisson-Lie groups, and many related Poisson manifolds, can be realized as moduli spaces of flat connections on surfaces with boundary and marked points, which makes the problem of their deformation quantization rather straightforward.

Alekseev, Kosmann-Schwarzbach and Meinrenken \cite{Alekseev00}  noticed that moduli spaces of flat $\g$-connections on surfaces with boundary and marked points carry a \emph{quasi-Poisson} structure, and that they can be constructed out of simple building blocks using the operation of \emph{fusion}. In \cite{LiBland:2012vo} we extended their result to surfaces with several marked points on each boundary component (which simplified the basic building block to just the Lie group $G$, with zero quasi-Poisson bracket), and also observed how coisotropic reduction can be used to produce various Poisson manifolds, and Poisson-Lie groups in particular.

Enriquez and Etingof \cite{Enriquez:2003tw} realized that the quantization of a $\g$-quasi-Poisson manifold should be an associative algebra in Drinfeľd's braided monoidal category $U\g\Mod^\Phi$, where $\Phi$ is a Drinfeľd associator.

Our strategy is to start with a commutative algebra in a Drinfeľd category and then apply to it a monoidal functor to form an associative algebra in the category of vector spaces. The monoidal functor is a composition of a quantum analogue of the quasi-Poisson operations of fusion (basically an obvious monoidal structure on the tensor product functor in a braided monoidal category) and coisotropic reduction.

We should stress that our aim is rather modest. We just provide a star product for certain Poisson manifolds. We do not discuss how it depends on additional choices (of how we decompose the surface into a union of disks). More importantly, we do not prove that our quantization procedure preserves algebraic structures on the Poisson manifolds; for example, we do not prove that we get Hopf algebras out of Poisson-Lie groups (though the star product is equal (at the origin) to the quantum coproduct of Etingof and Kazhdan  \cite{Etingof:1996bc}). We plan to address these problems, as well as the quantization of quasi-Poisson moment maps (needed for quantization of certain Poisson manifolds), in a future work.

\section{Commutative algebras in the Drinfeľd category}

Let $\g$ be a Lie algebra with a chosen invariant element $t\in(S^2\g)^\g$, and let $\Phi\in\mathbb{C}\la\!\la x,y\ra\!\ra$ be a Drinfeľd associator. The element $t$ and the associator $\Phi$ may be used to deform the symmetric monoidal structure on the category $U\g\Mod$ to a braided monoidal structure.
 More precisely, let $U\g\Mod^\Phi$ be the category with same objects as $U\g\Mod$, and with
$$\Hom_{U\g\Mod^\Phi}(X,Y)=\Hom_{U\g\Mod}(X,Y)[\![\hbar]\!].$$
The tensor products are the same in both categories, but the braiding in $U\g\Mod^\Phi$ is the symmetry in $U\g\Mod$ composed with the action of $\exp(\hbar t^{1,2}/2)$, and the associativity constraint is given by the action of $\Phi(\hbar t^{1,2},\hbar t^{2,3})$. See \cite{Drinfeld:1989tu} for details.

\begin{prop}\label{prop:comm}
Let $A$ be a commutative associative algebra in $U\g\Mod$ with product $m:A\otimes A\to A$, such that 
$$m\circ(t^{1,2}\cdot)=0.$$
Then $A$, with its original product, is a commutative associative algebra in $U\g\Mod^\Phi$.
\end{prop}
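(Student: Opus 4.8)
The plan is to check directly that $(A,m)$ satisfies the axioms of a commutative associative algebra with respect to the braiding and associativity constraint of $U\g\Mod^\Phi$. Since $U\g\Mod^\Phi$ has the same objects, the same tensor product, and the same unit object and left/right unit constraints (the latter being identities) as $U\g\Mod$, and morphism spaces are only extended by $\hbar$, nothing else needs verification. The engine of the proof is that the hypothesis $m\circ(t^{1,2}\cdot)=0$ propagates: precomposing $m$ (or the threefold product $m_3$) with any operator built from the action of $t$ on adjacent tensor legs still yields $0$.

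\emph{Commutativity.} Recall that the braiding $\beta_{A,A}\colon A\otimes A\to A\otimes A$ is $\sigma_{A,A}\circ\exp(\tfrac{\hbar}{2}t^{1,2}\cdot)$, with $\sigma$ the symmetry of $U\g\Mod$. Hence $m\circ\beta_{A,A}=m\circ\sigma_{A,A}\circ\exp(\tfrac{\hbar}{2}t^{1,2}\cdot)=m\circ\exp(\tfrac{\hbar}{2}t^{1,2}\cdot)$, using ordinary commutativity of $m$ for the second equality. Expanding the exponential, every term of positive order has the shape $m\circ(t^{1,2}\cdot)\circ(\cdots)$, which is $0$ since $m\circ(t^{1,2}\cdot)=0$; as the sum converges $\hbar$-adically we get $m\circ\beta_{A,A}=m$, i.e. $m$ is commutative in $U\g\Mod^\Phi$.

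\emph{Associativity.} Set $m_3:=m\circ(m\otimes\mathrm{id}_A)=m\circ(\mathrm{id}_A\otimes m)$ (equal by ordinary associativity of $m$). Since the associativity constraint of $U\g\Mod^\Phi$ on $A^{\otimes 3}$ is the action of $\Phi(\hbar t^{1,2},\hbar t^{2,3})$, what must be shown is $m_3\circ\Phi(\hbar t^{1,2},\hbar t^{2,3})=m_3$. The key observation is that $m_3\circ(t^{1,2}\cdot)=0$ and $m_3\circ(t^{2,3}\cdot)=0$: the first follows from $(m\otimes\mathrm{id}_A)\circ(t^{1,2}\cdot)=\bigl(m\circ(t^{1,2}\cdot)\bigr)\otimes\mathrm{id}_A=0$ (the action of $t^{1,2}$ on $A^{\otimes 3}$ only touches the first two legs), and the second symmetrically from $(\mathrm{id}_A\otimes m)\circ(t^{2,3}\cdot)=\mathrm{id}_A\otimes\bigl(m\circ(t^{1,2}\cdot)\bigr)=0$. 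Now $\Phi(x,y)=1+(\text{terms of positive degree})$ in $\mathbb{C}\la\!\la x,y\ra\!\ra$, and every monomial of positive degree in the noncommuting variables $x,y$ begins (on the left) with $x$ or with $y$. Substituting $x=\hbar t^{1,2}$ and $y=\hbar t^{2,3}$ and precomposing with $m_3$, each such monomial is annihilated, and since the resulting sum converges $\hbar$-adically we conclude $m_3\circ\bigl(\Phi(\hbar t^{1,2},\hbar t^{2,3})-1\bigr)=0$, which is the required associativity in $U\g\Mod^\Phi$. (If $A$ is unital, unitality transfers with no further work because the unit constraints of $U\g\Mod^\Phi$ are identities.)

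\emph{Main obstacle.} I do not expect a genuine obstacle here: the statement is essentially formal, the content being the single remark that $m\circ(t^{1,2}\cdot)=0$ forces $m$ and $m_3$ to kill every composite assembled from actions of $t$ on adjacent tensor factors. The only things to be careful about are the $\hbar$-adic convergence of the two infinite sums (each power of $\hbar$ receives only finitely many contributions) and keeping track of which tensor legs $t^{1,2}$ and $t^{2,3}$ act on when commuting them past $m\otimes\mathrm{id}_A$ and $\mathrm{id}_A\otimes m$.
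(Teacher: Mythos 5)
Your proof is correct and takes essentially the same approach as the paper, whose proof simply observes that both identities $m\circ(\exp(\hbar t^{1,2}/2)\,\cdot\,)=m$ and $m^{(3)}\circ(\Phi(\hbar t^{1,2},\hbar t^{2,3})\,\cdot\,)=m^{(3)}$ follow from $m\circ(t^{1,2}\cdot)=0$. You have merely spelled out the details the paper leaves implicit (every positive-degree word in $t^{1,2},t^{2,3}$ is annihilated upon precomposition with $m$ or $m^{(3)}$), and those details are all accurate.
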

\begin{proof}
We need to show that 
$$m\circ(\exp(\hbar t^{1,2}/2)\,\cdot\,)=m\quad \text{and}\quad m^{(3)}\circ(\Phi(\hbar t^{1,2},\hbar t^{2,3})\,\cdot\,)=m^{(3)},$$
 where $m^{(3)}=m\circ(m\otimes 1)=m\circ(1\otimes m)$. Both follow from $m\circ(t^{1,2}\cdot)=0$.
\end{proof}

We shall call a Lie subalgebra $\mf c\subset\g$ \emph{coisotropic} if the image of $t\in S^2\g$ in $S^2(\g/\mf c)$ vanishes.
\begin{cor}
Let $M$ be a manifold with an action of $\g$, such that the stabilizers of points are coisotropic Lie subalgebras of $\g$. Then $C^\infty(M)$, with its original product, is a commutative associative algebra in $U\g\Mod^\Phi$.
\end{cor}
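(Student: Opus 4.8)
The plan is to deduce this from Proposition~\ref{prop:comm}. First I would record that $C^\infty(M)$ is a commutative associative algebra in $U\g\Mod$: the product $m$ is pointwise multiplication, and the $\g$-action is the one in which $X\in\g$ acts on $f$ by $X\cdot f$, the derivative of $f$ along the fundamental vector field $\underline X$ of the action; $\g$-equivariance of $m$ is then exactly the Leibniz identity $X\cdot(fg)=(X\cdot f)\,g+f\,(X\cdot g)$. Thus, by Proposition~\ref{prop:comm}, it remains only to check that $m\circ(t^{1,2}\,\cdot\,)=0$ as a map $C^\infty(M)\otimes C^\infty(M)\to C^\infty(M)$.

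Next I would note that such a map is determined by its values on simple tensors $f\otimes g$, and that a function on $M$ vanishes iff it vanishes at every point, so it suffices to show that $(m\circ(t^{1,2}\,\cdot\,))(f\otimes g)$ vanishes at every $p\in M$. Fix such a $p$ and let $\mf c=\g_p\subset\g$ be its stabilizer, which is coisotropic by hypothesis; by definition this says that $t$ lies in the kernel of $S^2\g\to S^2(\g/\mf c)$. Choosing a linear complement $\g=\mf c\oplus V$ and using $S^2\g=S^2\mf c\oplus(\mf c\otimes V)\oplus S^2 V$, one sees that this kernel is spanned by the symmetric tensors $c\otimes x+x\otimes c$ with $c\in\mf c$, $x\in\g$; hence we may write $t=\sum_j\bigl(c_j\otimes x_j+x_j\otimes c_j\bigr)$ with all $c_j\in\g_p$.

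It then remains to evaluate
\[
\bigl(m\circ(t^{1,2}\,\cdot\,)\bigr)(f\otimes g)\big|_p=\sum_j\Bigl((c_j\cdot f)\,(x_j\cdot g)+(x_j\cdot f)\,(c_j\cdot g)\Bigr)\Big|_p=0 ,
\]
which holds because each summand contains a factor $(c_j\cdot f)(p)$ or $(c_j\cdot g)(p)$, and these vanish: $c_j\in\g_p$ means the fundamental vector field $\underline{c_j}$ is zero at $p$, hence so is its action on any function. As $p$ was arbitrary, $m\circ(t^{1,2}\,\cdot\,)=0$, and the Corollary follows from Proposition~\ref{prop:comm}.

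The only genuinely non-formal point is the passage to the pointwise statement: the decomposition $t=\sum_j(c_j\otimes x_j+x_j\otimes c_j)$ depends on $p$, since both $\g_p$ and the chosen complement $V$ vary with $p$. This is harmless, because at each point the decomposition is used only to evaluate one fixed map at that single point, so no globally coherent choice is needed. (If $\g$ is infinite-dimensional one should in addition make sense of the sums representing $t$ and its action on $C^\infty(M)\otimes C^\infty(M)$, but this is already implicit in the definition of $U\g\Mod^\Phi$ and requires nothing new.) I do not expect any further obstacle.
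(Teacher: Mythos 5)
Your argument is correct and is exactly the verification the paper leaves implicit: the corollary follows from Proposition~\ref{prop:comm} once one checks $m\circ(t^{1,2}\,\cdot\,)=0$, which you do pointwise using that $t$ lies in the kernel of $S^2\g\to S^2(\g/\g_p)$ and that fundamental vector fields of stabilizer elements vanish at $p$. The remark that the decomposition of $t$ may vary with $p$ is a nice touch but, as you say, harmless.
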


\begin{example}\label{ex:CG}
Let $G$ be a Lie group with the Lie algbera $\g$ and $C\subset G$ a closed Lie subgroup with a coisotropic Lie algebra $\mf c\subset \g$. Then $C^\infty(G/C)$ is a commutative associative algebra in $U\g\Mod^\Phi$.

If $\bar\g$ denotes $\g$ with $t$ replaced by $-t$, then the diagonal Lie subalgebra $\g_\text{diag}\subset\g\oplus\bar\g$ is coisotropic. We have a natural identification $(G\times\bar G)/G_\text{diag}=G$. The algebra $C^\infty(G)$ is thus a commutative associative algebra in $U(\g\oplus\bar\g)\Mod^\Phi$. The action is given explicitly by 
$$(\xi,\eta)\cdot f= (\eta^L-\xi^R)f,\quad f\in C^\infty(G), \quad(\xi,\eta)\in \g\oplus\bar\g,$$
 where $\xi^L/\eta^R$ denote the left/right-invariant vector fields on $G$ equal to $\xi/\eta$ at the identity.
\end{example}

\begin{prop}
Let $\g$ and $\h$ be Lie algebras with chosen elements $t_\g\in(S^2\g)^\g$, $t_\h\in(S^2\h)^\h$. Let $\mf c\subset\g$ be a coisotropic Lie subalgebra. The functor
$$\mf c\text{-invariants}:U(\g\oplus\h)\Mod^\Phi\to U\h\Mod^\Phi$$
is braided monoidal with the coherence maps $X^{\mf c}\otimes Y^{\mf c}\to (X\otimes Y)^{\mf c}$ being the natural inclusion. 
\end{prop}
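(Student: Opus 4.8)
The plan is to exhibit the three pieces of data of a (lax) braided monoidal functor and then verify the coherence axioms. The tensor-structure morphisms are prescribed: they are the inclusions $J_{X,Y}\colon X^{\mf c}\otimes Y^{\mf c}\hookrightarrow (X\otimes Y)^{\mf c}$, and the unit morphism is the identity of $\K=\K^{\mf c}$ (the trivial module is automatically $\mf c$-invariant). First I would record that these are legitimate. If $x\in X^{\mf c}$ and $y\in Y^{\mf c}$, then $\zeta\cdot(x\otimes y)=0$ for all $\zeta\in\mf c$ by the Leibniz rule, so $J_{X,Y}$ lands where claimed; it is $U\h$-equivariant because the $\h$-action commutes with the $\mf c$-action and hence restricts to $\mf c$-invariants while the inclusion intertwines it; being $\hbar$-independent it is in particular a morphism in $U\h\Mod^\Phi$; and it is manifestly natural in $X$ and $Y$, being the restriction of the identity of the ambient tensor product. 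The same observations show that $(-)^{\mf c}$ is a well-defined functor $U(\g\oplus\h)\Mod^\Phi\to U\h\Mod^\Phi$ in the first place.

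The substantive point, and the only place coisotropy enters, is the following. Regarding $t_\g\in(S^2\g)^\g\subset\g\otimes\g$ as a symmetric tensor, the hypothesis that its image in $S^2(\g/\mf c)$ vanishes means precisely that $t_\g=\sum_j(\zeta_j\otimes\xi_j+\xi_j\otimes\zeta_j)$ for some $\zeta_j\in\mf c$, $\xi_j\in\g$. Consequently, for any $U(\g\oplus\h)$-modules and any $x\in X^{\mf c}$, $y\in Y^{\mf c}$, the operator $t_\g^{1,2}$ annihilates $x\otimes y$, since $\zeta_j$ kills $x$ in the first group of terms and kills $y$ in the second. I would upgrade this to: any word in the operators $t_\g^{i,i+1}$, $t_\h^{i,i+1}$ acting on $X_1^{\mf c}\otimes\dots\otimes X_n^{\mf c}\subset X_1\otimes\dots\otimes X_n$ vanishes whenever it contains at least one $t_\g$-letter — look at the rightmost such letter; every $t_\h$-letter to its right preserves the subspace of $\mf c$-invariants (again because $\h$ commutes with $\mf c$), and the chosen $t_\g$-letter then annihilates the result by the case of two adjacent invariant factors just treated. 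Expanding $\Phi(\hbar t^{1,2},\hbar t^{2,3})$ and $\exp(\hbar t^{1,2}/2)$ with $t=t_\g+t_\h$ into such words, it follows that on $X^{\mf c}\otimes Y^{\mf c}\otimes Z^{\mf c}$ the associativity constraint of $U(\g\oplus\h)\Mod^\Phi$ acts by $\Phi(\hbar t_\h^{1,2},\hbar t_\h^{2,3})$, and on $X^{\mf c}\otimes Y^{\mf c}$ the braiding acts by the flip composed with $\exp(\hbar t_\h^{1,2}/2)$ — that is, by the structural morphisms of $U\h\Mod^\Phi$.

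Granting this, the coherence axioms are immediate. Every object occurring in the associativity hexagon or the braiding square is, via iterated $J$'s, a subspace of a plain tensor product $X\otimes Y\otimes Z$ (resp. $X\otimes Y$); on such a subspace the associator and braiding of the source category restrict to those of the target by the previous paragraph, while the $J$'s, the flips, and the unit isomorphisms restrict to (or already are) the corresponding maps for $\h$. Chasing the associativity hexagon, a vector $\xi\in X^{\mf c}\otimes Y^{\mf c}\otimes Z^{\mf c}$ is sent along either leg to $\Phi(\hbar t_\h^{1,2},\hbar t_\h^{2,3})\,\xi$, which again lies in $X^{\mf c}\otimes Y^{\mf c}\otimes Z^{\mf c}$ and is then included into $(X\otimes Y\otimes Z)^{\mf c}$; the braiding square is chased identically with $\mathrm{flip}\circ\exp(\hbar t_\h^{1,2}/2)$; and the two unit triangles reduce to the unit axioms already present in $U\h\Mod$. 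The one genuinely nontrivial step is the coisotropy computation of the second paragraph; everything else is bookkeeping, so I expect no real obstacle beyond keeping that computation clean.
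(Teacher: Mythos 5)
Your proposal is correct and rests on exactly the same observation as the paper's (one-line) proof: coisotropy forces $t_\g$ to act by zero on tensor products of $\mf c$-invariants, so the associator and braiding of $U(\g\oplus\h)\Mod^\Phi$ restrict on invariants to those of $U\h\Mod^\Phi$ and the coherence axioms follow. You have simply spelled out the details that the paper leaves implicit.
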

\begin{proof}
On tensor products of $\mf c$-invariants the element $t_\g$ acts by $0$.
\end{proof}
\begin{cor}
If $A$ is an associative algebra in $U(\g\oplus\h)\Mod^\Phi$ then $A^{\mf c}$, with the product inherited from $A$, is an associative algebra in $ U\h\Mod^\Phi$.
\end{cor}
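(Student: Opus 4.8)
The plan is to invoke the general principle that a lax monoidal functor carries associative algebra objects to associative algebra objects, applied to the braided monoidal functor $F=(\mf c\text{-invariants})\colon U(\g\oplus\h)\Mod^\Phi\to U\h\Mod^\Phi$ provided by the previous proposition, whose coherence maps are the inclusions $X^{\mf c}\otimes Y^{\mf c}\hookrightarrow(X\otimes Y)^{\mf c}$. Concretely, writing $m\colon A\otimes A\to A$ for the product of $A$, I would first check that $m$ restricts to a morphism $A^{\mf c}\otimes A^{\mf c}\to A^{\mf c}$ in $U\h\Mod^\Phi$. Composing with the coherence inclusion, it is enough to see that $m$ carries $(A\otimes A)^{\mf c}$ into $A^{\mf c}$, and this is immediate from the $(\g\oplus\h)$-equivariance of $m$: if a vector is annihilated by $\mf c$, so is its image under $m$; equivalently, since $A$ is a $\g$-module algebra, $\mf c$ acts on $A$ by derivations, so a product of $\mf c$-invariant elements is again $\mf c$-invariant. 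The same equivariance shows the unit of $A$ lies in $A^{\mf c}$, so $A^{\mf c}$ is a unital subobject and the ``inherited product'' is literally the restriction of $m$.

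Next I would verify the associativity of this product as an identity of morphisms $A^{\mf c}\otimes(A^{\mf c}\otimes A^{\mf c})\to A^{\mf c}$ in $U\h\Mod^\Phi$, where the two composites differ by the associativity constraint of $U\h\Mod^\Phi$. The key point is exactly the one used in the proof of the previous proposition: on a triple tensor product of $\mf c$-invariants the element $t_\g$ acts by $0$ (by coisotropy of $\mf c$), so the associativity constraint of $U(\g\oplus\h)\Mod^\Phi$ restricts, along the coherence inclusions, to the associativity constraint of $U\h\Mod^\Phi$. Chasing the associativity identity for $m$ on $A$ through the inclusions $A^{\mf c}\otimes\dots\hookrightarrow(A\otimes\dots)^{\mf c}$ then produces precisely the desired associativity of the inherited product. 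Braidedness of $F$ plays no role in this corollary; it would only be needed to transport commutativity, which is not asserted here.

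I do not anticipate a real obstacle: given the previous proposition, this is just the standard statement that (lax) monoidal functors preserve monoids, and the only care needed is the bookkeeping that identifies the inherited product with $F(m)$ precomposed with the coherence map and tracks the associativity square through the inclusions. Should one prefer to avoid categorical language altogether, both points --- stability of $A^{\mf c}$ under $m$, and associativity over $U\h\Mod^\Phi$ --- reduce to the two one-line module-theoretic observations above, in the same spirit as the preceding proofs.
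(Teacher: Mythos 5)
Your proposal is correct and follows exactly the route the paper intends: the corollary is stated without proof precisely because it is the standard fact that a (lax) monoidal functor --- here the $\mf c$-invariants functor with its inclusion coherence maps from the preceding proposition --- sends associative algebras to associative algebras. Your additional bookkeeping (stability of $A^{\mf c}$ under $m$, and the restriction of the associativity constraint via the vanishing of the $t_\g$-action on invariants) is a faithful unpacking of that same argument.
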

We shall call the algebra $A^{\mf c}$ the \emph{reduction of $A$ by $\mf{c}$}.
\begin{example}
If we take $A=C^\infty(G)$ in $U(\g\oplus\bar\g)\Mod^\Phi$ and take the $\mf c$-invariants for a coisotropic $\mf c\subset\bar\g$, we get $A^{\mf c}=C^{\infty}(G/C)$, an algebra in $U\g\Mod^\Phi$.
\end{example}

\section{Fusion}\label{sect:fusion}
We shall produce non-commutative algebras using monoidal functors which are not braided monoidal, and applying them to (possibly commutative) associative algebras.

\begin{thm}\label{thm:qfusion}
Let $\cat C$ be a braided monoidal category. The functor
$$\otimes:\cat C\times\cat C\to\cat C$$
is a strong monoidal functor, with the monoidal structure 
$$(X_1\otimes Y_1)\otimes(X_2\otimes Y_2)\to (X_1\otimes X_2)\otimes(Y_1\otimes Y_2)\quad (\forall X_1,X_2,Y_1,Y_2\in\cat C)$$
given by the parenthesized braid
\begin{equation}\label{eq:J}
\begin{tikzpicture}[baseline=1cm]
\coordinate (diff) at (0.65,0);
\coordinate (dy) at (0,0.5);
\node(X1) at (0,0) {$(X_1$};
\node(Y1) at ($(X1)+(diff)$) {$Y_1)$};
\node(Z1) at (2,0) {$(X_2$};
\node(W1) at ($(Z1)+(diff)$) {$Y_2)$};
\node(X2) at (0,2) {$(X_1$};
\node(Z2) at ($(X2)+(diff)$) {$X_2)$};
\node(Y2) at (2,2) {$(Y_1$};
\node(W2) at ($(Y2)+(diff)$) {$Y_2)$};
\draw(X1)--(X2);
\draw(W1)--(W2);
\draw (Z1)..controls +(0,1) and +(0,-1)..(Z2);
\draw[line width=1ex,white] (Y1)..controls +(0,1) and +(0,-1)..(Y2);
\draw(Y1)..controls +(0,1) and +(0,-1)..(Y2);
\end{tikzpicture}
\end{equation}
\end{thm}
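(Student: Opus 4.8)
The plan is to verify directly that the natural transformation $J$ defined by the parenthesized braid in \eqref{eq:J} satisfies the two coherence axioms for a (strong) monoidal functor: the associativity hexagon and the unit triangles. The unit part is essentially trivial — one takes the monoidal unit $\mathbf 1\in\cat C$ as the unit object for the functor $\otimes$, and since the braiding restricted to strands involving $\mathbf 1$ is the identity (by compatibility of the braiding with the unit constraints), the coherence map $(\mathbf 1\otimes Y_1)\otimes(X_2\otimes\mathbf 1)\to(\mathbf 1\otimes X_2)\otimes(Y_1\otimes\mathbf 1)$ reduces to the appropriate unit isomorphism. So the real content is the associativity coherence.

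For the associativity axiom I would proceed diagrammatically. Given six objects $X_1,Y_1,X_2,Y_2,X_3,Y_3$, the two ways of building the comparison map
$$\bigl((X_1\otimes Y_1)\otimes(X_2\otimes Y_2)\bigr)\otimes(X_3\otimes Y_3)\longrightarrow (X_1\otimes X_2\otimes X_3)\otimes(Y_1\otimes Y_2\otimes Y_3)$$
— namely $J_{X_1\otimes X_2,\,Y_1\otimes Y_2,\,X_3,\,Y_3}\circ(J_{X_1,Y_1,X_2,Y_2}\otimes\mathrm{id})$ versus $J_{X_1,Y_1,\,X_2\otimes X_3,\,Y_2\otimes Y_3}\circ(\mathrm{id}\otimes J_{X_2,Y_2,X_3,Y_3})$, each conjugated by the appropriate associativity constraints of $\cat C$ — are represented by two parenthesized braids on six strands. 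The key step is to observe that both composites move the strand $Y_1$ past $X_2$ and past $X_3$, and move $Y_2$ past $X_3$, with no other crossings; the two braids differ only in the order in which these three elementary crossings are performed and in the bracketing of intermediate objects. By the defining relations of the braid groupoid on objects of a braided monoidal category — concretely, the naturality (functoriality) of the braiding together with the two hexagon axioms for $\sigma$ — these two words are equal as morphisms in $\cat C$. I would spell this out as a sequence of small commuting squares: naturality squares for the braiding, instances of the hexagon relating $\sigma_{A,B\otimes C}$ to $(\mathrm{id}\otimes\sigma_{A,C})(\sigma_{A,B}\otimes\mathrm{id})$, and Mac Lane pentagon instances to reconcile the parenthesizations.

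The cleanest way to package this is to invoke coherence for braided monoidal categories: once one checks that both composites are built entirely from structure isomorphisms and the braiding, and that the underlying \emph{permutation} of the six strands and the \emph{pattern of crossing signs} agree, the braid-coherence theorem (Joyal–Street) guarantees the two parenthesized braids name the same morphism, hence $J$ is associative. I would present the argument this way — draw the two six-strand braids, note they represent the same element of the corresponding braid group with the same source and target bracketings, and cite coherence — rather than grinding through the hexagon manipulations by hand.

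The main obstacle I anticipate is purely bookkeeping: keeping straight the six associativity constraints of $\cat C$ that get absorbed into the two sides of the hexagon, and making sure the claimed equality of braids is compared between the \emph{correct} source and target objects (with their specific parenthesizations) rather than up to an unspecified reassociation. In other words, the difficulty is not in the idea — it is the standard fact that $\otimes$ with the ``fusion'' structure map is strong monoidal — but in writing the diagram chase so that every associator is accounted for; invoking braided coherence is precisely what lets one sidestep that chase.
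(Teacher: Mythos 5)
Your proposal is correct and matches the paper's own argument: the paper likewise observes that both composites in the monoidal-functor coherence diagram are given by the same parenthesized braid (the one in which every strand moving to the right passes over every strand moving to the left), which is exactly your appeal to Joyal--Street braided coherence. Your write-up is just a more explicit version of the same idea, with the unit axioms (which the paper leaves implicit) also addressed.
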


\begin{proof}
Recall that a strong monoidal structure on a functor $F$ between two monoidal categories is a natural isomorphism $F(X\otimes Y)\to F(X)\otimes F(Y)$ such that the diagram
\begin{equation}\label{eq:JPhi}
\begin{tikzcd}
F((X\otimes Y)\otimes Z)\arrow{r}\arrow{d}& F(X\otimes Y)\otimes F(Z)\arrow{r} &(F(X)\otimes F(Y))\otimes F(Z)\arrow{d}\\
F(X\otimes (Y\otimes Z))\arrow{r}& F(X)\otimes F( Y\otimes Z)\arrow{r} &F(X)\otimes (F(Y)\otimes F(Z))
\end{tikzcd}
\end{equation}
commutes. In our case ($F=\otimes$) both ways of composing the morphisms from top left to the bottom right correspond to the same parenthesized braid, where all the strands moving to the right are above the strands moving to the left.

\end{proof}

\begin{cor}
If $A,B\in\cat C$ are monoids (and thus $(A,B)$ is a monoid in $\cat C\times\cat C$) then
$A\otimes B\in\cat C$ is also a monoid, with the product

$$
\begin{tikzpicture}[baseline=1cm]
\coordinate (diff) at (0.5,0);
\coordinate (dy) at (0,0.5);
\node(A1) at (0,0) {$A$};
\node(B1) at ($(A1)+(diff)$) {$B$};
\node(A2) at (2,0) {$A$};
\node(B2) at ($(A2)+(diff)$) {$B$};
\node(A3) at ($(A1)+(0,2.5)+0.5*(diff)$) {$A$};
\node(B3) at ($(A2)+(0,2.5)+0.5*(diff)$) {$B$};
\coordinate(A) at ($(A3)-(dy)$);
\coordinate(B) at ($(B3)-(dy)$);
\draw (A2)..controls +(0,1) and +(0,-1)..(A);
\draw[line width=1ex,white] (B1)..controls +(0,1) and +(0,-1)..(B);
\draw (B1)..controls +(0,1) and +(0,-1)..(B);
\draw (A1)..controls +(0,1) and +(0,-1)..(A);
\draw (B2)..controls +(0,1) and +(0,-1)..(B);
\draw (B)--(B3);
\draw(A)--(A3);
\end{tikzpicture}
$$
\end{cor}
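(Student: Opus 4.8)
The plan is to deduce the statement from Theorem~\ref{thm:qfusion} via the standard fact that a (lax, hence in particular strong) monoidal functor carries monoids to monoids. Concretely: if $F\colon\cat D\to\cat E$ is a monoidal functor with coherence maps $J_{U,V}\colon F(U)\otimes F(V)\to F(U\otimes V)$ and unit map $\mathbf 1_{\cat E}\to F(\mathbf 1_{\cat D})$, and if $(M,\mu,\eta)$ is a monoid in $\cat D$, then $F(M)$ becomes a monoid in $\cat E$ with product $F(M)\otimes F(M)\xrightarrow{J_{M,M}}F(M\otimes M)\xrightarrow{F(\mu)}F(M)$ and unit $\mathbf 1_{\cat E}\to F(\mathbf 1_{\cat D})\xrightarrow{F(\eta)}F(M)$; associativity of this product follows from the hexagon~\eqref{eq:JPhi} together with naturality of $J$ and functoriality of $F$, and unitality from the unit coherence axioms. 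So the first step is simply to recall (or cite) this lemma.

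Next I would apply it to $\cat D=\cat C\times\cat C$, equipped with the componentwise monoidal structure $(X_1,Y_1)\otimes(X_2,Y_2)=(X_1\otimes X_2,\,Y_1\otimes Y_2)$ and unit $(\mathbf 1,\mathbf 1)$, to $\cat E=\cat C$, and to $F=\otimes$ with the strong monoidal structure provided by Theorem~\ref{thm:qfusion}. A pair of monoids $A,B\in\cat C$ is precisely a monoid $(A,B)$ in $\cat C\times\cat C$, with product $(m_A,m_B)$ and the evident unit. Hence $F(A,B)=A\otimes B$ is a monoid in $\cat C$, with product
$$(A\otimes B)\otimes(A\otimes B)\xrightarrow{\ J_{(A,B),(A,B)}\ }(A\otimes A)\otimes(B\otimes B)\xrightarrow{\ m_A\otimes m_B\ }A\otimes B$$
and unit induced from the units of $A$ and $B$.

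Finally I would check that this composite is exactly the morphism drawn in the statement. By Theorem~\ref{thm:qfusion}, $J_{(A,B),(A,B)}$ is the parenthesized braid~\eqref{eq:J} with $X_1=X_2=A$, $Y_1=Y_2=B$, i.e.\ the two $A$-strands are brought together past the first $B$-strand, all right-moving strands passing above the left-moving one; post-composing with $m_A\otimes m_B$ merges the two $A$-strands and the two $B$-strands, producing precisely the picture in the corollary. The only thing requiring verification is this identification of string diagrams; there is no genuine obstacle, since associativity and the unit axioms are automatic from the cited lemma and the remainder is bookkeeping with braids.
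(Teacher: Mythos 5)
Your proposal is correct and matches the paper's (implicit) argument: the corollary is stated without proof precisely because it is the standard fact that a strong monoidal functor sends monoids to monoids, applied to the functor $\otimes\colon\cat C\times\cat C\to\cat C$ with the coherence map of Theorem~\ref{thm:qfusion}. Your identification of the composite $(m_A\otimes m_B)\circ J_{(A,B),(A,B)}$ with the displayed braid is exactly the intended reading of the picture.
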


In the case of $\cat C=U\g\Mod^\Phi$ there exists a universal element
$J\in(U\g)^{\otimes4}[\![\hbar]\!]$ such that the morphism \eqref{eq:J} is equal to
$$(1\otimes s_{Y_1,X_2}\otimes1)\circ(J\cdot),$$
where $s_{Y_1,X_2}:Y_1\otimes X_2\to X_2\otimes Y_1$ is the symmetry morphism. The property of $J$ given by 
Theorem \ref{thm:qfusion} also implies the following (slightly stronger) result.

\begin{thm}\label{thm:qfusion2}
Let $\g$ and $\h$ be Lie algebras with chosen elements $t_\g\in(S^2\g)^\g$, $t_\h\in(S^2\h)^\h$. Let
$$F:U(\g\oplus\g\oplus\h)\Mod^\Phi\to U(\g\oplus\h)\Mod^\Phi,\ F(X)=X$$
be the functor coming from the morphism of Lie algebras
$$\g\oplus\h\to\g\oplus\g\oplus\h,\ (u,v)\mapsto(u,u,v).$$
Then $F$ is a monoidal functor where the coherence map is the natural transformation defined by the action of
$J\in(U\g)^{\otimes4}[\![\hbar]\!]=\bigl(U(\g\oplus\g)\bigr)^{\otimes2}[\![\hbar]\!]$. 
\end{thm}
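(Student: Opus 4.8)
The plan is to deduce Theorem \ref{thm:qfusion2} from Theorem \ref{thm:qfusion} essentially by ``stabilizing'' the universal fusion construction with the extra factor $\h$, which plays no active role. First I would record what has to be checked: that the family of morphisms $\mu_{X,Y}\colon F(X\otimes Y)\to F(X)\otimes F(Y)$, given by the action of $J$ (viewed in $\bigl(U(\g\oplus\g)\bigr)^{\otimes 2}$, acting trivially through the $\h$-factors), is (i) natural in $X,Y\in U(\g\oplus\g\oplus\h)\Mod^\Phi$, (ii) a morphism in $U(\g\oplus\h)\Mod^\Phi$, i.e.\ commutes with the $(\g\oplus\h)$-action induced along $(u,v)\mapsto(u,u,v)$, and (iii) satisfies the pentagon/hexagon coherence \eqref{eq:JPhi} for associativity (and, since we only claim a monoidal functor, not necessarily compatibility with the braiding). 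Naturality (i) is automatic because $J$ is a fixed element acting on all objects.

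For (ii) the key observation is that, by construction, $J$ is built from the braiding and associativity constraints of $U\g\Mod^\Phi$ acting on the first two $\g$-factors, so it is $\g_{\mathrm{diag}}$-invariant in the appropriate sense: the morphism \eqref{eq:J}, being a parenthesized braid, is natural, hence commutes with the $\g$-action coming from the diagonal embedding, and it obviously commutes with any action factoring through the $\h$-slot since $J$ involves only $U\g$. Concretely, the action of $(u,v)\in\g\oplus\h$ on $X\otimes Y$ after applying $F$ is the action of $(u,u,v)$, and one checks that $J$ commutes with $u^{(1,1,1,1)}$ (the total action of $u$ on all four tensor slots, where slots $1,2$ carry the first $\g$ and slots $3,4$ the second $\g$ in the identification $(U\g)^{\otimes 4}=(U(\g\oplus\g))^{\otimes 2}$) and with $v^{(1,1,1,1)}$ trivially. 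This is exactly the statement that the parenthesized braid \eqref{eq:J} is a morphism of $\g$-modules, which is part of Theorem \ref{thm:qfusion}.

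For (iii), the coherence diagram \eqref{eq:JPhi} for $F$ unwinds to a relation among $J$, the other copy of $J$ obtained by acting on shifted slots, and the associators $\Phi(\hbar t_\g^{i,j},\hbar t_\g^{j,k})$ and $\Phi(\hbar t_\h^{i,j},\hbar t_\h^{j,k})$. Since $t_\h$ commutes with everything coming from $U\g$, and the $\h$-associators simply rebracket the $\h$-labelled tensor slots without interacting with $J$, the diagram reduces to precisely the coherence identity \eqref{eq:JPhi} for the functor $\otimes\colon U\g\Mod^\Phi\times U\g\Mod^\Phi\to U\g\Mod^\Phi$, which is the content of Theorem \ref{thm:qfusion}. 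In other words, the identity satisfied by $J$ as the monoidal structure on $\otimes$ in the braided category $\cat C=U\g\Mod^\Phi$ is an identity in $(U\g)^{\otimes 6}[\![\hbar]\!]$, and tensoring it with the identity on the $\h$-slots gives the identity needed here.

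I expect the main obstacle to be bookkeeping rather than conceptual: one must be careful about which of the six tensor factors $J$ acts on in each path of \eqref{eq:JPhi}, and about the identification $(U\g)^{\otimes 4}=(U(\g\oplus\g))^{\otimes 2}$ being compatible with the reassociation isomorphisms used when comparing $F((X\otimes Y)\otimes Z)$ with $(F(X)\otimes F(Y))\otimes F(Z)$. The cleanest way to organize this, and the route I would actually take in the writeup, is the diagrammatic one used already in the proof of Theorem \ref{thm:qfusion}: both composites in \eqref{eq:JPhi} correspond to the same parenthesized braid on the $\g$-strands (all rightward strands over all leftward strands), with the $\h$-strands running straight through and uninvolved; equality of the two composites is then immediate from the fact that these two parenthesized braids are equal, which is what Theorem \ref{thm:qfusion} already established. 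Hence no new computation is needed beyond translating Theorem \ref{thm:qfusion} into the language of the universal element $J$ and observing that the passage from $\cat C\times\cat C\to\cat C$ to $F$ only adds inert $\h$-factors.
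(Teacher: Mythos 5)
Your proposal is correct and follows essentially the same route as the paper: the point is that $J$ satisfies a \emph{universal} coherence identity in $(U\g)^{\otimes 6}[\![\hbar]\!]$ (the paper writes it out as $\Phi^{1,3,5}\Phi^{2,4,6}J^{13,24,5,6}J^{1,2,3,4}=J^{1,2,35,46}J^{3,4,5,6}\Phi^{12,34,56}$), which holds because both composites in \eqref{eq:JPhi} come from the same parenthesized braid, and the $\h$-factors are inert (the paper simply assumes $\h$ trivial). The only caveat is one of framing: the genuine gain of Theorem~\ref{thm:qfusion2} over Theorem~\ref{thm:qfusion} is not the added $\h$-slots but that the universal identity lets $J$ act on arbitrary $(\g\oplus\g)$-modules rather than only external tensor products $X\otimes Y$ — your appeal to the identity in $(U\g)^{\otimes 6}[\![\hbar]\!]$ does supply exactly this, so the argument is sound.
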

\begin{proof}
To simplify the notation, we assume $\h$ is trivial. The key thing to notice is that for $\cat C=U\g\Mod^\Phi$, the coherence map described in Theorem~\ref{thm:qfusion} is given by the universal element $J\in(U\g)^{\otimes4}[\![\hbar]\!]$, which in turn, satisfies a universal intertwining equation. Specifically, the element $J$ satisfies the relation
$$\Phi^{1,3,5}\Phi^{2,4,6}J^{13,24,5,6}J^{1,2,3,4}=J^{1,2,35,46}J^{3,4,5,6}\Phi^{12,34,56}\in (U\g)^{\otimes 6}[\![\hbar]\!],$$
as both sides of this identity come from the same parenthesized braid with 6 strands: the braid corresponding to either chain of morphisms from top-left to bottom right of \eqref{eq:JPhi}.
 This equation is precisely the identity which the coherence map is required to satisfy for Theorem~\ref{thm:qfusion2}.
\end{proof}

If $A\in U(\g\oplus\g\oplus\h)\Mod^\Phi$ is an associative algebra, we shall call the associative algebra $F(A)\in U(\g\oplus\h)\Mod^\Phi$ a \emph{(quantum) fusion} of $A$. Notice that $F$ is not a braided monoidal functor, hence $F(A)$ can be non-commutative even if $A$ is commutative. 

If $m:A\otimes A\to A$ is the original product in $A$ and $m'$ the fused product then
\begin{subequations}
\begin{equation}
m'=m\circ(J\cdot).
\end{equation}
Since
$$J=1+\frac{\hbar}{2}t^{2,3}+O(\hbar^2),$$
we get
\begin{equation}\label{eq:fus_hbar}
m'=m+\frac{\hbar}{2}m\circ(t^{2,3}\cdot)+O(\hbar^2).
\end{equation}
\end{subequations}

\section{Quasi-Poisson algebras}\label{sec:QPAlg}
Let $\g$ and $t\in(S^2\g)^\g$ be as above. Let $\phi\in\bigwedge^3\g\subset\g^{\otimes 3}$ be defined by
$$\phi=\frac{1}{4}[t^{1,2},t^{2,3}],$$
i.e.
$$\phi(\alpha,\beta,\gamma)=-\frac{1}{4}\la [t^\sharp\alpha,t^\sharp\beta],\gamma\ra\quad \forall\alpha,\beta,\gamma\in\g^*,$$
where $t^\sharp:\g^*\to\g$ is given by contraction with $t$.
Every associator $\Phi$ satisfies
\begin{equation}\label{eq:Phi_phi}
\Phi(\hbar t^{1,2},\hbar t^{2,3})=1+\frac{\hbar^2}{6}\phi+O(\hbar^3).
\end{equation}

\begin{defn}[\cite{Alekseev00}]
A \emph{$\g$-quasi-Poisson algebra} is a $\g$-module $A$ with a $\g$-invariant commutative associative product, and with a $\g$-invariant skew-symmetric bilinear map
$$\{\,,\,\}:A\times A\to A,$$
which is a derivation in both components, such that
\begin{equation}\label{eq:qPalg}
\{a_1,\{a_2,a_3\}\}+c.p.=-m^{(3)}(\phi\cdot(a_1\otimes a_2\otimes a_3)),
\end{equation}
for all $a_1,a_2,a_3\in A$,
where $m^{(3)}:A^{\otimes 3}\to A$ is the product.
\end{defn}

The following proposition is from \cite{Enriquez:2003tw}; for completeness, we include a proof.
\begin{prop}
Let $A$ be an associative algebra in $U\g\Mod^\Phi$ with product $m_\hbar:A\otimes A\to A$. Suppose that the reduction $m_0$ of $m_\hbar$ modulo $\hbar$ is a commutative product. Let
$$\{\,,\,\}:A\times A\to A$$
be defined by
$$\{a,b\}=m_\hbar(a\otimes b-b\otimes a)/\hbar\mod\hbar.$$
Then $(A,m_0,\{\,,\,\})$ is a $\g$-quasi-Poisson algebra.
\end{prop}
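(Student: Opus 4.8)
The plan is to expand the structure maps of the algebra $A$ to first, resp.\ second, order in $\hbar$ and read off the quasi-Poisson axioms from the constraints that associativity and the coherence data impose. Write $m_\hbar = m_0 + \hbar m_1 + \hbar^2 m_2 + O(\hbar^3)$ as a map $A\otimes A\to A$, where the tensor product and all coherence morphisms are taken in $U\g\Mod^\Phi$. Since the braiding in $U\g\Mod^\Phi$ is $s\circ(\exp(\hbar t^{1,2}/2)\,\cdot\,)$ and the associator is $\Phi(\hbar t^{1,2},\hbar t^{2,3}) = 1 + \tfrac{\hbar^2}{6}\phi + O(\hbar^3)$ by \eqref{eq:Phi_phi}, each of these constraints is a genuine identity in $U\g\Mod$ after extracting the coefficient of a given power of $\hbar$. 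First I would record that $\g$-invariance of $m_\hbar$ (it is a morphism in $U\g\Mod^\Phi$, whose Hom-spaces are those of $U\g\Mod$ extended by $\hbar$) forces every $m_i$ to be a $\g$-module map, so $m_0$ and $\{\,,\,\} = m_1 - m_1\circ\tau$ (with $\tau$ the flip) are $\g$-invariant; skew-symmetry of $\{\,,\,\}$ is immediate from its definition.

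Next I would extract the consequences of associativity $m_\hbar\circ(m_\hbar\otimes 1) = m_\hbar\circ(1\otimes m_\hbar)\circ\Phi(\hbar t^{1,2},\hbar t^{2,3})$ order by order. At order $\hbar^0$ this is just associativity of $m_0$, which combined with $m_0$ commutative makes $(A,m_0)$ a commutative associative algebra. At order $\hbar^1$ one gets the Hochschild cocycle condition expressing that $m_1$ is a Hochschild $2$-cocycle for $m_0$; the antisymmetrization of this is exactly the statement that $\{\,,\,\}$ is a biderivation of $m_0$ (here I would use that the order-$\hbar$ term of the associator vanishes, so no correction enters yet). The only place the associator contributes is at order $\hbar^2$: the Hochschild coboundary of $m_2$ equals a bilinear combination of $m_0$ and $m_1$ plus the term $m_0^{(3)}\circ(\tfrac{1}{6}\phi\,\cdot\,)$ coming from $\Phi$. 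Antisymmetrizing this order-$\hbar^2$ identity over the three arguments — which kills the $m_2$-coboundary and the symmetric $m_1$-bilinear pieces, leaving only the cyclic sum of $\{a_1,\{a_2,a_3\}\}$ on one side — produces precisely the quasi-Jacobi identity \eqref{eq:qPalg}, once one checks that the antisymmetrization of $\tfrac{1}{6}\phi$ over all six permutations reproduces $\phi$ (which holds because $\phi\in\bigwedge^3\g$ is already totally antisymmetric, and $\tfrac{1}{6}$ is cancelled by the $3!$ permutations, up to the sign bookkeeping of the Gerstenhaber bracket).

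The main obstacle, and the step I would be most careful with, is the order-$\hbar^2$ bookkeeping: one must track not only the $\phi$-contribution of the associator but also the ``square'' contributions of $m_1$ (terms like $m_1\circ(m_1\otimes 1)$) and verify that under full antisymmetrization these assemble exactly into $\{a_1,\{a_2,a_3\}\} + \text{c.p.}$ with the correct sign, with no residual terms. It is convenient to phrase the whole computation in the Gerstenhaber/Hochschild language: associativity of $m_\hbar$ says $m_\hbar\star m_\hbar$ (Gerstenhaber product, deformed by $\Phi$) vanishes, its antisymmetrization in degree $\hbar^2$ is the Maurer–Cartan-type identity, and the claim is that the bracket induced by $m_1$ satisfies the $\phi$-twisted Jacobi identity. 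One should also note that the derivation property in a single slot at order $\hbar$ is what lets us replace $m_1$ by $\tfrac12\{\,,\,\}$ plus a symmetric cocycle that drops out under antisymmetrization. Finally I would remark that everything takes place in $U\g\Mod$ (no completion issues beyond formal $\hbar$), so extracting $\hbar$-coefficients is legitimate, completing the verification that $(A,m_0,\{\,,\,\})$ is a $\g$-quasi-Poisson algebra.
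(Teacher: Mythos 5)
Your proposal is correct in substance, but it is organized quite differently from the paper's proof, and the comparison is instructive. You expand $m_\hbar=m_0+\hbar m_1+\hbar^2 m_2+\cdots$ and extract the quasi-Jacobi identity from the order-$\hbar^2$ Hochschild/Gerstenhaber bookkeeping, antisymmetrizing to kill the $m_2$-coboundary and the symmetric $m_1$-cross-terms; as you yourself note, verifying that the residual terms assemble exactly into $\{a_1,\{a_2,a_3\}\}+\text{c.p.}$ is the delicate step, and your write-up leaves it as a promissory note rather than carrying it out. The paper sidesteps this entirely by working with the full commutator $[a,b]=m_\hbar(a\otimes b-b\otimes a)$ and using the exact (all orders in $\hbar$, valid for any bilinear product) identity
$$[a_1,[a_2,a_3]]+\text{c.p.}=\bigl(m_\hbar\circ(1\otimes m_\hbar)-m_\hbar\circ(m_\hbar\otimes 1)\bigr)\Bigl(\sum_{\sigma\in S_3}\on{sgn}\sigma\; a_{\sigma(1)}\otimes a_{\sigma(2)}\otimes a_{\sigma(3)}\Bigr),$$
then substituting the $\Phi$-associativity $m_\hbar\circ(m_\hbar\otimes 1)=m_\hbar\circ(1\otimes m_\hbar)\circ\Phi_A$ to get $m_\hbar\circ(1\otimes m_\hbar)\circ(1-\Phi_A)$ applied to the antisymmetrization; expanding $\Phi_A=1+\tfrac{\hbar^2}{6}\phi+O(\hbar^3)$ and using the total antisymmetry of $\phi$ together with the symmetry of $m_0^{(3)}$ (which cancels the $\tfrac16$ against the six permutations) gives the quasi-Jacobi identity in one line, with no $m_2$ or $m_1\circ(m_1\otimes1)$ terms ever appearing. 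So the "main obstacle" you flag is an artifact of the order-by-order expansion; if you adopt the paper's commutator identity you get the cancellation for free, while your route has the mild advantage of also making explicit the $\g$-invariance and the biderivation property (via the order-$\hbar$ Hochschild cocycle condition), which the paper leaves implicit.
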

\begin{proof}
The associativity of $m_\hbar$ means
$$m_\hbar\circ(1\otimes m_\hbar)\circ \Phi_A=m_\hbar\circ(m_\hbar\otimes 1),$$
where
$$\Phi_A:A^{\otimes 3}\to A^{\otimes 3}$$
is the action of $\Phi(\hbar t^{1,2},\hbar t^{2,3})$ on $A^{\otimes 3}$.

Let $[a,b]:=m_\hbar(a\otimes b-b\otimes a)$. 
Using \eqref{eq:Phi_phi}
we get
\begin{multline*}
[a_1,[a_2,a_3]]+c.p.=\\
=\bigl(m_\hbar\circ(1\otimes m_\hbar)-m_\hbar\circ(m_\hbar\otimes 1)\bigr)\bigl(\sum_{\sigma\in S_3}\on{sgn}\sigma\, a_{\sigma(1)}\otimes a_{\sigma(2)}\otimes a_{\sigma(3)}\bigr)\\
= \bigl(m_\hbar\circ(1\otimes m_\hbar)\circ(1-\Phi_A)\bigr)\bigl(\sum_{\sigma\in S_3}\on{sgn}\sigma\, a_{\sigma(1)}\otimes a_{\sigma(2)}\otimes a_{\sigma(3)}\bigr)\\
=-\hbar^2(m_0\circ(1\otimes m_0))(\phi\cdot(a_1\otimes a_2\otimes a_3))+O(\hbar^3),
\end{multline*}
as we wanted to prove.

\end{proof}

The constructions of associative algebras in $U\g\Mod^\Phi$ have straightforward analogs for $\g$-quasi-Poisson algebras:
\begin{itemize}
\item 
If $A$ is a commutative associative algebra in $U\g\Mod$ such $m\circ(t^{1,2}\cdot)=0$ then $A$, with $\{,\}=0$, is a $\g$-quasi-Poisson algebra. We shall call these algebras \emph{quasi-Poisson-commutative}. This is true, in particular, for $A=C^\infty(M)$, where $M$ is a $\g$-manifold such that the stabilizers are coisotropic Lie subalgebras of $\g$ \cite{LiBland:2010wi}.

\item If $A$ is a $\g\oplus\h$-quasi-Poisson algebra and $\mf c\subset\g$ is a coisotropic Lie subalgebra then the space of $\mf c$-invariants $A^{\mf c}$ is an $\h$-quasi-Poisson algebra \cite{LiBland:2012vo}.

\item If $A$ is a $\g\oplus\g\oplus\h$-quasi-Poisson algebra then the morphism of Lie algebras
$$\g\oplus\h\to\g\oplus\g\oplus\h,\ (u,v)\mapsto(u,u,v)$$
makes $A$ to a $\g\oplus\h$-quasi-Poisson algebra, with the new bracket (cf.\ \eqref{eq:fus_hbar})
$$\{a,b\}'=\{a,b\}+\half m(t^{2,3}\cdot (a\otimes b-b\otimes a)),$$
where $m:A\otimes A\to A$ is the product. The result is called a \emph{(quasi-Poisson) fusion} of $A$ (see \cite{Alekseev00}).

\item If $A$ is a $\g\oplus\h$-quasi-Poisson algebra then it is, with the same bracket, also a $\bar\g\oplus\h$-quasi-Poisson algebra. The same is true for algebras in $U(\g\oplus\h)\Mod^\Phi$ and $U(\bar\g\oplus\h)\Mod^\Phi$ only if the associator $\Phi$ is even, i.e.\ if 
$$\Phi(-x,-y)=\Phi(x,y),$$
since then the the monoidal structures on $U(\g\oplus\h)\Mod^\Phi$ and $U(\bar\g\oplus\h)\Mod^\Phi$ are the same (with different braidings).
\end{itemize}

\begin{defn}[\cite{Enriquez:2003tw}]
A \emph{deformation quantization} of a $\g$-quasi-Poisson algebra $$(A,m_0,\{,\})$$ is a series $m_\hbar=\sum_{i=0}^\infty \hbar^i m_{(i)}$ making $A$ an associative algebra in $U\g\Mod^\Phi$, such that 
$$m_{(0)}=m_0,\quad \{a,b\}=m_{(1)}(a\otimes b-b\otimes a),$$
 and such that each $m_{(i)}$ is a bidifferential operator (with respect to $m_0)$.
\end{defn}

The following theorem gives us a deformation quantization of any quasi-Poisson algebra which is built  out of a quasi-Poisson-commutative algebra by repeated fusion and reduction.

\begin{thm}\label{thm:quantization}
\begin{enumerate}
\item If $(A,m_0,\{,\}=0)$ is a $\g$-quasi-Poisson-commutative algebra then $A$, with $m_\hbar=m_0$, is its deformation quantization.
\item If $(A,m_\hbar)$ is a deformation quantization of a $\g\oplus\g\oplus\h$-quasi-Poisson algebra $(A,m_0,\{,\})$ then the quantum fusion of $(A,m_\hbar)$ is a deformation quantization of the quasi-Poisson fusion of $(A,m_0,\{,\})$.
\item If $(A,m_\hbar)$ is a deformation quantization of a $\g\oplus\h$-quasi-Poisson algebra $(A,m_0,\{,\})$ and if $\mf c\subset\g$ is a coisotropic Lie subalgebra then $A^{\mf c}$ is a deformation quantization of the $\h$-quasi-Poisson algebra $A^{\mf c}$.
\end{enumerate}
\end{thm}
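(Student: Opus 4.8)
The plan is to handle the three parts separately but by the same template: in each case the earlier sections already tell us that the relevant construction yields an \emph{associative} algebra in the appropriate Drinfeľd category, so the only thing left is to expand the product in powers of $\hbar$ and verify the three conditions in the definition of deformation quantization — that $m_{(0)}$ is the given commutative product, that the antisymmetrization of $m_{(1)}$ is the given bracket, and that every $m_{(i)}$ is bidifferential with respect to $m_0$. Part (1) is then immediate: quasi-Poisson-commutativity means $m_0\circ(t^{1,2}\cdot)=0$, so Proposition~\ref{prop:comm} says that $A$ with the $\hbar$-independent product $m_\hbar=m_0$ is a (commutative) associative algebra in $U\g\Mod^\Phi$; here $m_{(0)}=m_0$, $m_{(1)}=0$ induces the zero bracket $=\{,\}$, and the only nonzero $m_{(i)}$ is $m_{(0)}=m_0$, a bidifferential operator of bi-order $(0,0)$.

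For (2), Theorem~\ref{thm:qfusion2} together with its corollary gives that the quantum fusion $F(A)$ is an associative algebra in $U(\g\oplus\h)\Mod^\Phi$ with product $m'_\hbar=m_\hbar\circ(J\cdot)$, $J\in(U\g)^{\otimes4}[\![\hbar]\!]$. Writing $J=\sum_k\hbar^kJ_{(k)}$ and $m_\hbar=\sum_j\hbar^jm_{(j)}$ we get $m'_{(i)}=\sum_{j+k=i}m_{(j)}\circ(J_{(k)}\cdot)$. Reducing mod $\hbar$ uses $J\equiv1$, so $m'_{(0)}=m_0$; the first-order part uses $J=1+\frac\hbar2 t^{2,3}+O(\hbar^2)$, giving $m'_\hbar=m_\hbar+\frac\hbar2 m_\hbar\circ(t^{2,3}\cdot)+O(\hbar^2)$ as in \eqref{eq:fus_hbar}, so the induced bracket is $m'_{(1)}(a\otimes b-b\otimes a)=\{a,b\}+\half\, m_0(t^{2,3}\cdot(a\otimes b-b\otimes a))$, which is exactly the quasi-Poisson fusion bracket recorded in Section~\ref{sec:QPAlg}. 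For bidifferentiality, each $J_{(k)}\in(U(\g\oplus\g))^{\otimes2}$ acts on $A\otimes A$ by a sum of tensor products of elements of an enveloping algebra, hence by a bidifferential operator $A\otimes A\to A\otimes A$ (the Lie algebra acts by derivations), and composing with the bidifferential $m_{(j)}$ keeps one within bidifferential operators of bounded bi-order; a finite sum of these is $m'_{(i)}$.

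For (3), the corollary to the Proposition on $\mf c$-invariants (which uses coisotropy of $\mf c$) gives that $A^{\mf c}$ with the restricted product $m_\hbar^{\mf c}$ is an associative algebra in $U\h\Mod^\Phi$, the coherence map of the reduction functor being the inclusion $A^{\mf c}\otimes A^{\mf c}\hookrightarrow(A\otimes A)^{\mf c}$, so $m_\hbar^{\mf c}$ is literally $m_\hbar$ restricted. Since $m_\hbar$ is a morphism in $U(\g\oplus\h)\Mod^\Phi$, each $m_{(i)}$ is $\mf c$-equivariant and hence maps $A^{\mf c}\otimes A^{\mf c}$ into $A^{\mf c}$, so $m_\hbar^{\mf c}=\sum\hbar^i m_{(i)}^{\mf c}$ with $m_{(0)}^{\mf c}=m_0^{\mf c}$ and, for $a,b\in A^{\mf c}$, $m_{(1)}^{\mf c}(a\otimes b-b\otimes a)=\{a,b\}$, the restricted $\h$-quasi-Poisson bracket from Section~\ref{sec:QPAlg}. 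That restriction preserves bidifferentiality follows from the algebraic (Grothendieck) characterization: the operator identity expressing that $m_{(i)}$ has bi-order $\le(k,l)$ involves only iterated commutators with multiplications by elements of $A$, and it survives restriction of all arguments and all multipliers to $A^{\mf c}$ because multiplication by $c\in A^{\mf c}$ on $A^{\mf c}$ is the restriction of multiplication by $c$ on $A$.

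The only step that is not a one-line unwinding is this bidifferentiality bookkeeping — that post-composition with the action of $J$ in (2) and restriction to an invariant subalgebra in (3) stay inside the class of bidifferential operators — and both are dispatched by the algebraic description of differential operators. Everything else is a matter of matching the $\hbar^0$- and $\hbar^1$-terms of the fused and reduced products against the quasi-Poisson fusion and reduction formulas already stated in Section~\ref{sec:QPAlg}.
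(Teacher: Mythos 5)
Your proposal is correct and follows the same route as the paper, whose entire proof is that part (1) is Proposition~\ref{prop:comm}, part (2) follows from Equation~\eqref{eq:fus_hbar}, and part (3) is obvious. You have merely filled in the details the paper leaves implicit, most usefully the bookkeeping that the fused and restricted products remain bidifferential (which works because the $\g$-invariance of $m_0$ means $U\g$ acts by differential operators).
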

\begin{proof}
Part 1 is Proposition \ref{prop:comm}, part 2 follows from Equation \eqref{eq:fus_hbar}, and part 3 is obvious.
\end{proof}

\section{Quasi-Poisson structures on moduli spaces of flat connections}\label{sec:moduli}

If $M$ is a manifold with a $\g$-quasi-Poisson structure on $C^\infty(M)$ then $M$ is called a \emph{$\g$-quasi-Poisson manifold}. Equivalently, $M$ is a manifold endowed with an action of $\g$ and with a $\g$-invariant bivector field $\pi$ such that
$$[\pi,\pi]/2=-\phi_M,$$
where $\phi_M$ is the image of $\phi\in\bigwedge^3\g$ under the action map $\g\to \mf{X}(M)$.

We shall say that $M$ is \emph{quasi-Poisson-commutative} if $C^\infty(M)$ is quasi-Poisson-commutative, i.e.\ if $\pi=0$ and the action of $\g$ has coisotropic stabilizers.

If $M$ is a $\g$-quasi-Poisson manifold and $N$ a $\h$-quasi-Poisson manifold then $M\times N$ is $\g\oplus\h$-quasi-Poisson, with $\pi_{M\times N}=\pi_M+\pi_N$. If $M$ is $\g$-quasi-Poisson and $M'\to M$ is a local diffeomorphism (an \'etale map) then $M'$ is also $\g$-quasi-Poisson.

The most important examples of quasi-Poisson manifolds arise as moduli spaces of flat connections on a surface \cite{Alekseev00,Alekseev97}. Our presentation follows \cite{LiBland:2012vo}.

Let $\Sigma$ be a compact oriented surface with boundary and $V\subset\partial\Sigma$ a finite set of marked points. We suppose that $V$ meets every component of $\Sigma$ ($\Sigma$ doesn't have to be connected).

Let $\Pi_1(\Sigma,V)$ denote the fundamental groupoid of $\Sigma$ with the base set $V$. Let
$$M_{\Sigma,V}(G)=\Hom(\Pi_1(\Sigma,V),G).$$
By a \emph{skeleton} of $(\Sigma,V)$, we mean an embedded oriented graph $\Gamma\subset\Sigma$ with the vertex set $V$, such that there is a deformation retraction of $\Sigma$ to $\Gamma$.
Given a skeleton, we get a bijection
$$M_{\Sigma,V}(G)\cong G^{E},$$
where $E$ is the set of edges of $\Gamma$. In this way $M_{\Sigma,V}(G)$ becomes a manifold (the manifold structure is independent of the choice of $\Gamma$).

There is a natural action of $G^V$ on $M_{\Sigma,V}(G)$, namely
$$(g\cdot\mu)(\gamma)=g^{\phantom{-1}}_{\on{in}(\gamma)}\mu(\gamma)g^{-1}_{\on{out}(\gamma)}\quad
(\mu:\Pi_1(\Sigma,V)\to G,\gamma\in\Pi_1(\Sigma,V),g\in G^V).$$
\begin{center}
\begingroup%
  \makeatletter%
  \providecommand\color[2][]{%
    \errmessage{(Inkscape) Color is used for the text in Inkscape, but the package 'color.sty' is not loaded}%
    \renewcommand\color[2][]{}%
  }%
  \providecommand\transparent[1]{%
    \errmessage{(Inkscape) Transparency is used (non-zero) for the text in Inkscape, but the package 'transparent.sty' is not loaded}%
    \renewcommand\transparent[1]{}%
  }%
  \providecommand\rotatebox[2]{#2}%
  \ifx\svgwidth\undefined%
    \setlength{\unitlength}{265.66932656bp}%
    \ifx\svgscale\undefined%
      \relax%
    \else%
      \setlength{\unitlength}{\unitlength * \real{\svgscale}}%
    \fi%
  \else%
    \setlength{\unitlength}{\svgwidth}%
  \fi%
  \global\let\svgwidth\undefined%
  \global\let\svgscale\undefined%
  \makeatother%
  \begin{picture}(1,0.41648039)%
    \put(0,0){\includegraphics[width=\unitlength]{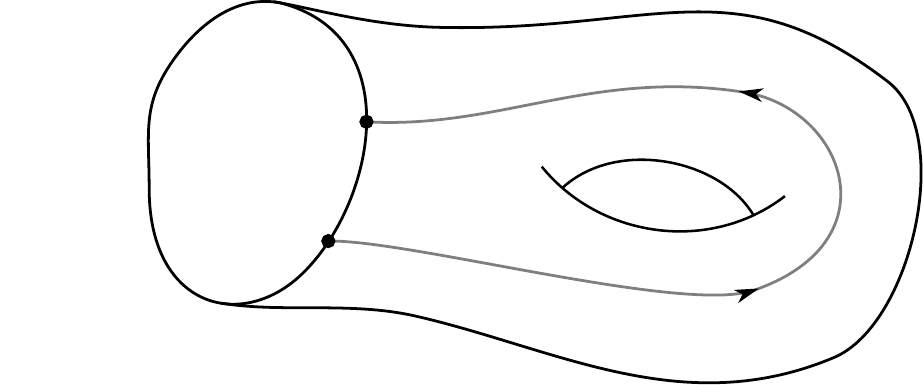}}%
    \put(0.33992195,0.15514438){\color[rgb]{0,0,0}\makebox(0,0)[rb]{\smash{$\on{out}(\gamma)$}}}%
    \put(0.37992415,0.28448479){\color[rgb]{0,0,0}\makebox(0,0)[rb]{\smash{$\on{in}(\gamma)$}}}%
    \put(0.14384364,0.20168041){\color[rgb]{0,0,0}\makebox(0,0)[rb]{\smash{$\Sigma=$}}}%
    \put(0.7421688,0.33390241){\color[rgb]{0,0,0}\makebox(0,0)[b]{\smash{$\gamma$}}}%
  \end{picture}%
\endgroup%

\end{center}

Recall from Example~\ref{ex:CG} that $\g\oplus\bar\g$ acts on $G$ with coisotropic stabilizers. Thus $G$, with $\pi=0$, is $\g\oplus\bar\g$-quasi-Poisson, and hence also $\g\oplus\g$-quasi-Poisson. Notice that it is quasi-Poisson-commutative in the first case, but not in the second case.

\begin{thm}[\cite{LiBland:2012vo}]\label{thm:ModBiv}
There is a natural bivector field $\pi_{\Sigma,V}$ on $M_{\Sigma,V}(G)$ which, together with the action of $\g^V$, makes $M_{\Sigma,V}(G)$ to a $\g^V$-quasi-Poisson manifold. It is specified uniquely by the following properties.
\begin{itemize}
\item If $\Sigma$ is a disk and $V$ consists of 2 points, so that $M_{\Sigma,V}(G)=G$, then $\pi_{\Sigma,V}=0$.

\item If $\Sigma$ is the disjoint union of $\Sigma_1$ and $\Sigma_2$, so that $M_{\Sigma,V}(G)=M_{\Sigma_1,V_1}(G)\times M_{\Sigma_2,V_2}(G)$, then $\pi_{\Sigma,V}=\pi_{\Sigma_1,V_1}+\pi_{\Sigma_2,V_2}$.

\item If $(\Sigma^*,V^*)$ is obtained from $(\Sigma, V)$ by a ``corner connected sum'' at two points $P,Q\in V$,  as in the picture, then $M_{\Sigma^*,V^*}(G)$ is obtained by fusing the $\g^{V}$-quasi-Poisson structure on $M_{\Sigma,V}(G)$ along the
the $P^{th}$ and $Q^{th}$ factors of $\g^{V}$. 
\end{itemize}
\begin{center}
\begingroup%
  \makeatletter%
  \providecommand\color[2][]{%
    \errmessage{(Inkscape) Color is used for the text in Inkscape, but the package 'color.sty' is not loaded}%
    \renewcommand\color[2][]{}%
  }%
  \providecommand\transparent[1]{%
    \errmessage{(Inkscape) Transparency is used (non-zero) for the text in Inkscape, but the package 'transparent.sty' is not loaded}%
    \renewcommand\transparent[1]{}%
  }%
  \providecommand\rotatebox[2]{#2}%
  \ifx\svgwidth\undefined%
    \setlength{\unitlength}{350.1152832bp}%
    \ifx\svgscale\undefined%
      \relax%
    \else%
      \setlength{\unitlength}{\unitlength * \real{\svgscale}}%
    \fi%
  \else%
    \setlength{\unitlength}{\svgwidth}%
  \fi%
  \global\let\svgwidth\undefined%
  \global\let\svgscale\undefined%
  \makeatother%
  \begin{picture}(1,0.25565742)%
    \put(0,0){\includegraphics[width=\unitlength]{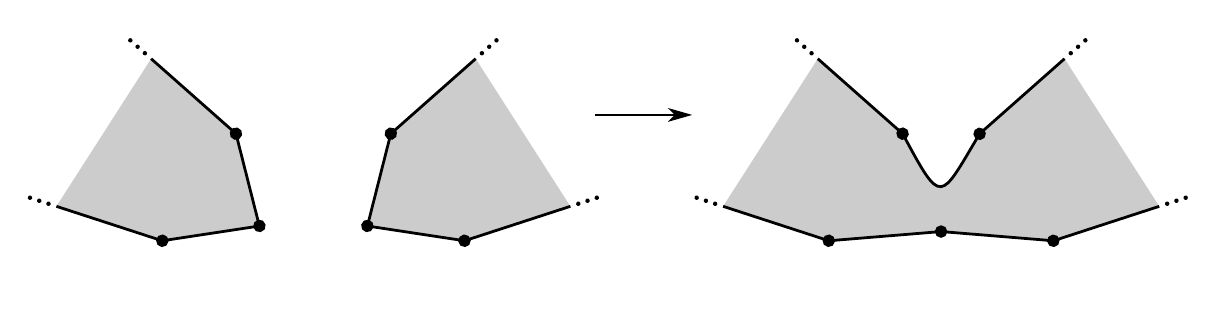}}%
    \put(0.13191874,0.0262079){\color[rgb]{0,0,0}\makebox(0,0)[b]{\smash{$A$}}}%
    \put(0.19412736,0.15372789){\color[rgb]{0,0,0}\makebox(0,0)[lb]{\smash{$B$}}}%
    \put(0.37928605,0.0262079){\color[rgb]{0,0,0}\makebox(0,0)[b]{\smash{$C$}}}%
    \put(0.32314822,0.15372789){\color[rgb]{0,0,0}\makebox(0,0)[rb]{\smash{$D$}}}%
    \put(0.21792138,0.07581959){\color[rgb]{0,0,0}\makebox(0,0)[lb]{\smash{$P$}}}%
    \put(0.29781964,0.07581959){\color[rgb]{0,0,0}\makebox(0,0)[rb]{\smash{$Q$}}}%
    \put(0.25486881,0.21544745){\color[rgb]{0,0,0}\makebox(0,0)[b]{\smash{$\Sigma$}}}%
    \put(0.68030952,0.0262079){\color[rgb]{0,0,0}\makebox(0,0)[b]{\smash{$A$}}}%
    \put(0.74251818,0.15372789){\color[rgb]{0,0,0}\makebox(0,0)[lb]{\smash{$B$}}}%
    \put(0.86369792,0.0262079){\color[rgb]{0,0,0}\makebox(0,0)[b]{\smash{$C$}}}%
    \put(0.80756009,0.15372789){\color[rgb]{0,0,0}\makebox(0,0)[rb]{\smash{$D$}}}%
    \put(0.77545201,0.07581959){\color[rgb]{0,0,0}\makebox(0,0)[b]{\smash{$P^*=Q^*$}}}%
    \put(0.77127014,0.21544745){\color[rgb]{0,0,0}\makebox(0,0)[b]{\smash{$\Sigma^*$}}}%
  \end{picture}%
\endgroup%

\end{center}

If $P\in V$ is such that the set $V-\{P\}$ meets every component of $\Sigma$ then $M_{\Sigma,V-\{P\}}(G)$ is the reduction of $M_{\Sigma,V}(G)$ by $\g$ acting at $P$. 
\end{thm}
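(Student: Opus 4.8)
The plan is to prove this by choosing a skeleton of $(\Sigma,V)$ adapted to the marked point $P$. First I would establish the following elementary geometric fact: since $V-\{P\}$ meets every component of $\Sigma$, there is a skeleton $\Gamma$ of $(\Sigma,V)$ in which $P$ is \emph{univalent}, i.e.\ incident to a single edge $e_0$, whose other endpoint is a vertex $Q\in V-\{P\}$. Indeed, starting from an arbitrary skeleton, the component of $\Sigma$ containing $P$ has another marked point and hence a non-loop edge at $P$; sliding the remaining edge-ends at $P$ along one such edge produces the desired $\Gamma$. Deleting the dangling edge $e_0$ together with the vertex $P$ then yields a skeleton $\Gamma'$ of $(\Sigma,V-\{P\})$, with edge set $E'=E\setminus\{e_0\}$.

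Next I would identify the underlying spaces and actions. With these skeletons, $M_{\Sigma,V}(G)\cong G^{E}$ and $M_{\Sigma,V-\{P\}}(G)\cong G^{E'}$; and since $\Pi_1(\Sigma,V-\{P\})$ is the full subgroupoid of $\Pi_1(\Sigma,V)$ on the objects $V-\{P\}$ --- an equivalence of groupoids, because $V-\{P\}$ meets every component --- the restriction map $M_{\Sigma,V}(G)\to M_{\Sigma,V-\{P\}}(G)$ is in these coordinates the projection $G^{E}\to G^{E'}$ forgetting the coordinate $\mu(e_0)$. Because $P$ is univalent, the copy of $\g$ acting at $P$ inside $\g^{V}$ acts only on $\mu(e_0)$, and there it acts by translation (left or right, according to the orientation of $e_0$); hence this action is free, the restriction map is a principal $G$-bundle, and $C^\infty\bigl(M_{\Sigma,V-\{P\}}(G)\bigr)$ is identified, $\g^{V-\{P\}}$-equivariantly, with $C^\infty\bigl(M_{\Sigma,V}(G)\bigr)^{\g}$ --- the reduction by the $\g$ acting at $P$. (Here $G$ is taken connected; for disconnected $G$ one reads ``the $\g$ acting at $P$'' as ``the group $G$ acting at $P$''.)

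It remains to match the quasi-Poisson brackets. By the properties listed in Theorem~\ref{thm:ModBiv}, the skeleton $\Gamma$ exhibits $(M_{\Sigma,V}(G),\pi_{\Sigma,V})$ as built from the disjoint union, over $e\in E$, of copies of $G$ (one per edge, with $\pi=0$, each viewed as the moduli space of the disk $D_e$ with marked points $\on{out}(e),\on{in}(e)$, and hence quasi-Poisson-commutative as a $\g_{\on{in}(e)}\oplus\bar\g_{\on{out}(e)}$-module by Example~\ref{ex:CG}), by performing, for each vertex $v$, the fusions that merge the edge-ends meeting at $v$ into the single $\g$ acting at $v$ (using the harmless replacement of $\bar\g_v$ by $\g_v$). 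Since $P$ is univalent there is nothing to fuse at $P$. Now reduce by the $\g$ at $P$. This reduction commutes with every fusion performed at a vertex $v\neq P$, since those involve a disjoint copy of $\g$ --- this is immediate from the description of fusion and reduction as monoidal functors in Sections~\ref{sect:fusion}--\ref{sec:QPAlg}. Commuting the reduction past all those fusions, it is left acting only on the factor $G$ belonging to the edge $e_0$, which it reduces by translation to a point, i.e.\ to the trivial $\g_Q$-quasi-Poisson algebra. Therefore the reduction of $(M_{\Sigma,V}(G),\pi_{\Sigma,V})$ by the $\g$ at $P$ equals the disjoint union, over $e\in E'$, of the edge $G$'s, fused at the vertices $V-\{P\}$ according to $\Gamma'$ --- which is precisely $(M_{\Sigma,V-\{P\}}(G),\pi_{\Sigma,V-\{P\}})$.

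The main obstacle is the first step: producing a skeleton in which $P$ is univalent. This is where the hypothesis that $V-\{P\}$ meets every component of $\Sigma$ is used (a component consisting of $P$ alone admits no such presentation), and it requires a short argument about edge-slide moves on spines of surfaces with marked boundary points --- moves that are in any case part of the apparatus of \cite{LiBland:2012vo}. The remaining ingredients --- the principal-bundle description of the restriction map, and the commutation of reduction with fusion --- are formal.
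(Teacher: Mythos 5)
This theorem is quoted from \cite{LiBland:2012vo} and the paper gives no proof of it, so there is nothing internal to compare your argument against; judged on its own terms, your proposal has one genuine gap. What you actually prove is the final sentence of the theorem (the reduction claim) together with the uniqueness half of the first assertion: given a skeleton, the three bullet points force $\pi_{\Sigma,V}$ to be the iterated fusion of the zero bivectors on the edge copies of $G$, and your univalent-vertex argument for commuting the reduction at $P$ past the fusions at the other vertices is correct (reducing the dangling edge factor by one-sided translation leaves only functions independent of $\mu(e_0)$, on which the residual $\g_Q$-action and the fusion terms involving that edge-end vanish). But the substantive content of the theorem is the \emph{existence} of a \emph{natural} $\pi_{\Sigma,V}$, i.e.\ that the bivector obtained this way is independent of the choice of skeleton and of the order in which the edge-ends at each vertex are fused. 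Your third paragraph begins ``By the properties listed in Theorem~\ref{thm:ModBiv}, the skeleton $\Gamma$ exhibits\dots'', which presupposes exactly this; and the issue is not cosmetic, because quasi-Poisson fusion is associative but \emph{not} commutative (the correction term $\half m(t^{P,Q}\cdot(a\otimes b-b\otimes a))$ changes sign when $P$ and $Q$ are exchanged), so a priori different skeletons, and different linear orderings of the edge-ends at a given vertex, yield different bivectors. The ordering is fixed by the orientation of the surface near each marked point, and proving invariance under the moves relating any two skeletons (or giving an intrinsic, skeleton-free description of $\pi_{\Sigma,V}$) is precisely what \cite{LiBland:2012vo} has to do.

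A secondary, smaller point: in your reduction argument you should make explicit that the fusions at the vertex $Q$ must be performed in an order compatible with both $\Gamma$ and $\Gamma'$ after deleting $e_0$ (again because fusion is order-sensitive), and that the identification $C^\infty(M_{\Sigma,V}(G))^{\g}\cong C^\infty(M_{\Sigma,V-\{P\}}(G))$ uses connectedness of $G$ as you note. With the well-definedness of $\pi_{\Sigma,V}$ granted, the rest of your argument goes through.
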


In particular, a deformation retraction of $\Sigma$ onto the skeleton $\Gamma$ makes $\Sigma$ to a corner connected sum of a family of disks with 2 marked points each (one disk for each edge $e\in E$; the disk is the part of $\Sigma$ that retracts onto $e$, as in Figure~\ref{fig:fusedSurf}), hence the $\g^V$-quasi-Poisson manifold $M_{\Sigma,V}(G)=G^E$ is obtained from the $\g^{2E}$-quasi-Poisson manifold $G^E$ (with $\{\cdot,\cdot\}=0$) by repeated fusion.

For the purpose of quantization it is more convenient to produce the quasi-Poisson manifold $M_{\Sigma,V}(G)$ out of a quasi-Poisson-commutative manifold. Let us split $V$ to two disjoint subsets $V=V_+\sqcup V_-$. The manifold $M_{\Sigma,V}(G)$ is $\g^{V_+}\oplus\bar\g^{V_-}$-quasi-Poisson, with the same bivector field $\pi_{\Sigma,V}$. If both $V_+$ and $V_-$ meet every component of $\Sigma$ then there exists a skeleton $\Gamma$ which is bipartite with respect to $V_+$ and $V_-$, with edges oriented from $V_-$ to $V_+$. In this case we start with $G^E$ as a $\g^E\oplus\bar\g^E$-quasi-Poisson-commutative manifold, and obtain the $\g^{V_+}\oplus\bar\g^{V_-}$-quasi-Poisson structure on $M_{\Sigma,V}(G)$ by fusion.

If we don't want to split $V$ (or if it doesn't admit a convenient splitting), we can set $V_+=V$, temporarily add new points to $V_-$, and finally reduce the result by $\bar\g^{V_-}$.

\begin{figure}\label{fig}
\begin{center}
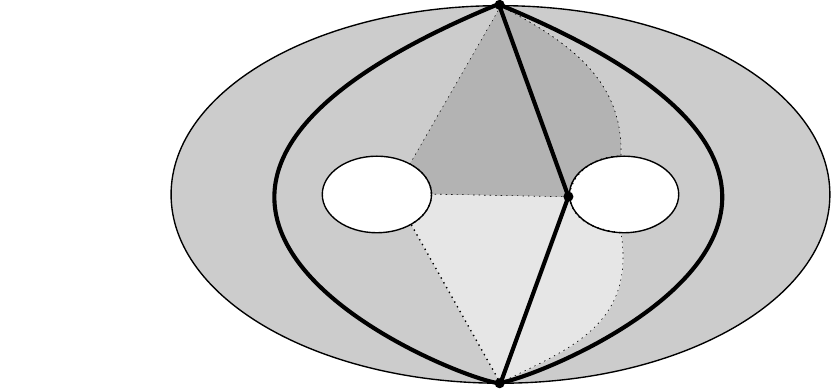
\end{center}
\caption{\label{fig:fusedSurf} The surface $\Sigma$ retracts onto the embedded skeletal graph. The pre-images (under the retract) of the graph's various edges are shaded in different tones; each such pre-image is diffeomorphic to a disk. In particular the retract yields an explicit decomposition of the surface as a corner connected sum of a family of disks each of which has two marked points.}
\end{figure}

\section{Deformation quantization of Poisson manifolds related to Poisson-Lie groups}

Let $\Sigma$ and $V\subset\partial\Sigma$ be as above. For every marked point $P\in V$ we choose a coisotropic Lie subalgebra $\mf c_P\subset\g$. Let $C_P\subset G$ be the corresponding connected Lie group. If the action of 
$$C=\prod_P C_P\subset G^V$$
on $M_{\Sigma,V}(G)$ is free and proper then the quotient
\begin{equation}\label{eq:Mred}
M_{\Sigma,V}(G)/C
\end{equation}
is a Poisson manifold. To deal with cases where the action is not free and proper we can sometimes replace $M_{\Sigma,V}(G)$ with another manifold $M'$ which is equipped with a free and proper action of $C$ along with a local $C$-equivariant diffeomorphism
$M'\to M_{\Sigma,V}(G)$; the lift of $\pi_{\Sigma,V}$ to $M'$ descends to a Poisson structure on $M'/C$.

Since the Poisson manifold \eqref{eq:Mred} can be obtained from commutative quasi-Poisson manifolds by fusion and reduction,  we readily obtain its deformation quantization by repeating the same steps in the world of associative algebras in  Drinfeľd categories (see Theorem \ref{thm:quantization}).

\begin{rem}
Our quantization of the Poisson manifold $M_{\Sigma,V}(G)/C$ depends on the choice of a skeleton of $(\Sigma, V)$. Let us make a conjecture about this dependence. If $\Gamma$ is a skeleton, let $A_\Gamma$ denote the corresponding quantized algebra of functions on  $M_{\Sigma,V}(G)/C$. Then there are isomorphisms $\phi_{\Gamma',\Gamma}:A_\Gamma\to A_{\Gamma'}$, natural up to inner automorphisms, and invertible elements $c_{\Gamma'',\Gamma',\Gamma}\in A_\Gamma$ such that
$$\phi_{\Gamma'',\Gamma'}\circ\phi_{\Gamma',\Gamma}=\phi_{\Gamma'',\Gamma}\circ\on{Ad}_{c_{\Gamma'',\Gamma',\Gamma}},$$
satisfying the cocycle condition
$$c_{\Gamma_3,\Gamma_2,\Gamma_0}\,c_{\Gamma_2,\Gamma_1,\Gamma_0}=c_{\Gamma_3,\Gamma_1,\Gamma_0}\,\phi_{\Gamma_1,\Gamma_0}^{-1}(c_{\Gamma_3,\Gamma_2,\Gamma_1}).$$
Equivalently, there is a natural linear category whose objects are the skeletons of $(\Sigma,V)$, such that any two objects are isomorphic (i.e.\ an algebroid in the sense of Kontsevich), and $\on{End}(\Gamma)=A_\Gamma$; if we choose isomorphisms $\psi_{\Gamma',\Gamma}:\Gamma\to\Gamma'$ for every $\Gamma,\Gamma'$ then we can set $\phi_{\Gamma',\Gamma}=\on{Ad}_{\psi_{\Gamma',\Gamma}}$ and $c_{\Gamma'',\Gamma',\Gamma}=\psi_{\Gamma'',\Gamma}^{-1}\circ\psi_{\Gamma'',\Gamma'}\circ\psi_{\Gamma',\Gamma}$.

This statement was proven in \cite{ko} by Kontsevich for his quantization of Poisson manifolds, where in place of $\Gamma$'s he used affine connections on the manifold; it is thus natural to expect it also in our case.
\end{rem}

We shall suppose in the following examples that $t\in S^2\g$ is non-degenerate, unless explicitly stated otherwise. The inverse of $t$ is thus a non-degenerate invariant symmetric bilinear pairing $\langle\cdot,\cdot\rangle$ on $\g$. These examples are taken from \cite{LiBland:2012vo}. We first consider several Poisson manifolds related to Poisson-Lie groups, and finally we discuss the general case.

\subsection{Poisson-Lie groups}
Let $\h,\h^*\subset\g$ be Lie subalgebras which are Lagrangian with respect to $\langle\cdot,\cdot\rangle$, such that $\g=\h\oplus\h^*$ as a vector space. In other words, $\h,\h^*\subset\g$ is a Manin triple.

Let us first suppose that the map
$$H\times H^*\to G$$
given by the product in $G$ is a diffeomorphism (it is always a local diffeomorphism).
Let us consider the moduli space for the surface
$$
\begin{tikzpicture}[baseline=-1cm]
\coordinate[label=left:{$\h^*$}] (A) at (0,0) ;
\coordinate[label=right:{$\h$}] (B) at (2,0) ;
\coordinate[label=below:{$\h^*$}] (C) at (1,-1.7);
\filldraw[fill=white!90!black] (A)--(B)--(C)--cycle;
\fill (A) circle (0.05) (B) circle (0.05) (C) circle (0.05);
\end{tikzpicture}
$$
For each marked point $P\in V$ we specify the corresponding Lie subalgebra $\mf c_P\subset\g$  on the figure. If no subalgebra is specified then we do not reduce at that marked point (and the result is quasi-Poisson).

Explicitly, we construct this moduli space by the following sequence of fusions and reductions:
$$
\begin{tikzpicture}[baseline=-1cm]
\coordinate[label=left:{$\h^*$}] (A0) at (-4,0) ;
\coordinate[label=right:{$\h$}] (B0) at (-1.5,0) ;
\coordinate (C0) at (-3,-1.7);
\coordinate (C0') at (-2.5,-1.7);
\filldraw[fill=white!90!black] (A0) .. controls (-3,0) and (-3,-.7) .. (C0)--cycle;
\filldraw[fill=white!90!black] (B0) .. controls (-2.5,0) and (-2.5,-.7) ..(C0')--cycle;
\fill (A0) circle (0.05) (B0) circle (0.05) (C0) circle (0.05) (C0') circle (0.05);
\draw[->,decorate,
     decoration={snake,amplitude=.4mm,segment length=2mm,post length=1mm}] (-1.3,-1) -- (-.2,-1);

\coordinate[label=left:{$\h^*$}] (A) at (0,0) ;
\coordinate[label=right:{$\h$}] (B) at (2,0) ;
\coordinate (C) at (1,-1.7);
\filldraw[fill=white!90!black] (A)--(B)--(C)--cycle;
\fill (A) circle (0.05) (B) circle (0.05) (C) circle (0.05);
\path[dotted,-] (C) edge (1,0);
\draw[->,decorate,
     decoration={snake,amplitude=.4mm,segment length=2mm,post length=1mm}] (2.2,-1) -- (3.3,-1);

\coordinate[label=left:{$\h^*$}] (A1) at (3.5,0) ;
\coordinate[label=right:{$\h$}] (B1) at (5.5,0) ;
\coordinate[label=below:{$\h^*$}] (C1) at (4.5,-1.7);
\filldraw[fill=white!90!black] (A1)--(B1)--(C1)--cycle;
\fill (A1) circle (0.05) (B1) circle (0.05) (C1) circle (0.05);
\end{tikzpicture}
$$
In the left most picture, we start with the $\g$-quasi-Poisson-commutative manifolds $G/H^*$ and $G/H$, and fuse them to form the triangle pictured in the center. Finally, we reduce by $H^*\subset G$ to make
$$H\cong (G/H^*\times G/H)/H^*$$
 a Poisson manifold.
We  identify it with $H$ by setting the holonomy of the top edge to be 1 and by demanding that the holonomy of the left edge is in $H$. The resulting Poisson structure on $H$ is the Poisson-Lie structure given by the Manin triple (see \cite{LiBland:2012vo} for more details).

Repeating the same steps in the Drinfeľd category, we endow
$$C^\infty(H)[\![\hbar]\!]\cong C^\infty(G/H^*\times G/H)^{\h^*}[\![\hbar]\!]$$
with an associative multiplication, i.e.\ we construct a star product quantizing the Poisson structure on $H$.

We can eliminate the assumption that $H\times H^*\to G$ is a diffeomorphism by replacing $G/H^*$ with $H$ and $G/H$ with $H^*$ (since they are locally diffeomorphic).

An inspection shows that the star product at $1\in H$ coincides with the deformed coproduct on $U\h$ constructed by Etingof and Kazhdan \cite{Etingof:1996bc}. We don't prove this statement here for lack of space; see the end of Section \ref{subs:double} for the basic idea.

\begin{rem}
There is another way how to get the Poisson-Lie group $H$ via fusion and reduction (corresponding to a different choice of skeleton):
$$
\begin{tikzpicture}[baseline=-1cm]
\coordinate[label=left:{$\h^*$}] (A0) at (-4,0) ;
\coordinate[label=right:{$\h^*$}] (B0) at (-1.5,0) ;
\coordinate (C0) at (-3,-1.7);
\coordinate (C0') at (-2.5,-1.7);
\filldraw[fill=white!90!black] (A0) .. controls (-3,0) and (-3,-.7) .. (C0)--cycle;
\filldraw[fill=white!90!black] (B0) .. controls (-2.5,0) and (-2.5,-.7) ..(C0')--cycle;
\fill (A0) circle (0.05) (B0) circle (0.05) (C0) circle (0.05) (C0') circle (0.05);
\draw[->,decorate,
     decoration={snake,amplitude=.4mm,segment length=2mm,post length=1mm}] (-1.3,-1) -- (-.2,-1);

\coordinate[label=left:{$\h^*$}] (A) at (0,0) ;
\coordinate[label=right:{$\h^*$}] (B) at (2,0) ;
\coordinate (C) at (1,-1.7);
\filldraw[fill=white!90!black] (A)--(B)--(C)--cycle;
\fill (A) circle (0.05) (B) circle (0.05) (C) circle (0.05);
\path[dotted,-] (C) edge (1,0);
\draw[->,decorate,
     decoration={snake,amplitude=.4mm,segment length=2mm,post length=1mm}] (2.2,-1) -- (3.3,-1);

\coordinate[label=left:{$\h^*$}] (A1) at (3.5,0) ;
\coordinate[label=right:{$\h^*$}] (B1) at (5.5,0) ;
\coordinate[label=below:{$\h$}] (C1) at (4.5,-1.7);
\filldraw[fill=white!90!black] (A1)--(B1)--(C1)--cycle;
\fill (A1) circle (0.05) (B1) circle (0.05) (C1) circle (0.05);
\end{tikzpicture}
$$
The resulting star product on $H$ coincides with the quantization of Lie bialgebras described in \cite{se}.

\end{rem}

\subsection{Poisson torsors}

Let $\mf f\subset\g$ be another Lagrangian Lie subalgebra such that $\mf f\cap\h=0$. It is equivalent to (the graph of) a twist of the Lie bialgebra $\h^*$.

Let us consider the moduli space for the surface
$$
\begin{tikzpicture}[baseline=-1cm]
\coordinate[label=left:{$\h^*$}] (A) at (0,0) ;
\coordinate[label=right:{$\h$}] (B) at (2,0) ;
\coordinate[label=below:{$\mf f$}] (C) at (1,-1.7);
\filldraw[fill=white!90!black] (A)--(B)--(C)--cycle;
\fill (A) circle (0.05) (B) circle (0.05) (C) circle (0.05);
\end{tikzpicture}
$$
As above, the moduli space is constructed by fusing $G/H$ with $G/H^*$, and then reducing by $F\subseteq G$. Once again, we can identify this moduli space with $H$; the resulting Poisson structure on $H$ is the affine Poisson structure given by the twist $\mf f$.

Again (after replacing $G/H$ with $H^*$ and $G/H^*$ with $H$) following the analogous procedure of fusion followed by reduction in the Drinfeľd category yields an associative product on
$$C^\infty(H)[\![\hbar]\!]\cong C^\infty(H\times H^*)^{\mf f}[\![\hbar]\!],$$
which is a star product quantizing the affine Poisson structure on $H$.

\subsection{Drinfeľd double}\label{subs:double}

Let us consider the moduli space for the square
$$
\begin{tikzpicture}[baseline=0.8cm]
\coordinate[label=left:{$\h^*$}] (A) at (0,0) ;
\coordinate[label=right:{$\h$}] (B) at (2,0) ;
\coordinate[label=right:{$\h$}] (C) at (2,2);
\coordinate[label=left:{$\h^*$}] (D) at (0,2) ;
\filldraw[fill=white!90!black] (A)--(B)--(C)--(D)--cycle;
\fill (A) circle (0.05) (B) circle (0.05) (C) circle (0.05) (D) circle (0.05);
\end{tikzpicture}
$$
We can identify it with $G$ by demanding the holonomies along the horizontal edges to be 1 (the holonomies along the vertical edges are then arbitrary - but equal - elements of $G$). The group $G$ with this Poisson structure is the Drinfeľd double of $H$.

It is useful to construct this moduli space via the intermediate step
$$
\begin{tikzpicture}[baseline=0.8cm]
\coordinate[label=left:{$\h^*$}] (A) at (0,0) ;
\coordinate[label=right:{$\h$}] (B) at (2,0) ;
\coordinate[label=right:{$\h$}] (C) at (2,2);
\coordinate[label=left:{$\h^*$}] (D) at (0,2);
\coordinate(E) at (1,0.8);
\coordinate(F) at (1,1.2);
\filldraw[fill=white!90!black] (A)--(B)--(E)--cycle (C)--(D)--(F)--cycle;
\fill (A) circle (0.05) (B) circle (0.05) (C) circle (0.05) (D) circle (0.05) (E) circle (0.05) (F) circle (0.05);
\path[dotted,-] (E) edge (1,0);
\path[dotted,-] (F) edge (1,2);
\end{tikzpicture}
$$

The lower triangle corresponds to the fusion of $G/H\times G/H^*$, the upper triangle corresponds to the fusion of $\bar G/H^*\times \bar G/H$ (we consider it as a $\bar\g$-quasi-Poisson manifold for convenience). We take their product and reduce by the diagonal $G\subset G\times\bar G$.

Repeating these steps in Drinfeľd categories yields a deformation quantization of the double, which is again equal (at $1\in G$) to the quantization of the double given by Etingof and Kazhdan \cite{Etingof:1996bc}. We don't have the space to summarize the procedure of Etingof and Kazhdan here. Let us, however, recall that their main step is a definition of a coproduct $\Delta^\Phi$ on $U\g$, making it a coassociative coalgebra in the Drinfeľd category $U\g\Mod^\Phi$. The element $\Delta^\Phi1\in U\g\otimes U\g$ is then a twist turning the quasi-Hopf algebra $U\g$ (with the associator $\Phi$) to a Hopf algebra. The lower triangle on the figure above (the fusion of $G/H\times G/H^*$) gives us a star product on $G$ making $C^\infty(G)$ an associative algebra in $U\g\Mod^\Phi$. By construction, this star product is dual to the coproduct $\Delta^\Phi$ on $U\g$. From here it is not difficult to see that our quantization of the double coincides with the quantization of Etingof and Kazhdan. 

\subsection{Heisenberg double}
We can change the previous example slightly and consider the square
$$
\begin{tikzpicture}[baseline=0.8cm]
\coordinate[label=left:{$\h^*$}] (A) at (0,0) ;
\coordinate[label=right:{$\h$}] (B) at (2,0) ;
\coordinate[label=right:{$\h^*$}] (C) at (2,2);
\coordinate[label=left:{$\h$}] (D) at (0,2) ;
\filldraw[fill=white!90!black] (A)--(B)--(C)--(D)--cycle;
\fill (A) circle (0.05) (B) circle (0.05) (C) circle (0.05) (D) circle (0.05);
\end{tikzpicture}
$$
The moduli space can again be identified with
$G$. As a Poisson manifold, it is the so-called Heisenberg double of $H$; up to local diffeomorphism, it is also the Lu-Weinstein double symplectic groupoid. Repeating the analogous constructions in the Drinfeľd category yields quantizations of these spaces.

\subsection{Moduli spaces}
Let us now summarize how to quantize the moduli space \eqref{eq:Mred} in general (we don't suppose that $t$ is non-degenerate anymore). We represent $(\Sigma,V)$ as a corner-connected sum of disks with 2 marked points. By Theorem~\ref{thm:ModBiv}, the moduli space is formed by fusion and reduction from copies of the moduli space for the disk with 2 marked points. Hence, by Theorem~\ref{thm:quantization}, it is enough to quantize $M_{\Sigma,V}(G)\cong G$ in the special case when $(\Sigma,V)$ is a disk with 2 marked points.

The algebra $C^\infty(G)$ (with the original product) is associative (and commutative) in $U(\g\oplus\bar\g)\Mod^\Phi$. If the associator $\Phi$ is even then it is also associative in $U(\g\oplus\g)\Mod^\Phi$ (as the associativity constraints are the same). If not, we consider $C^\infty(G\times G)$ as a commutative algebra in $U(\g\oplus\bar\g\oplus\g\oplus\bar\g)\Mod^\Phi$, fuse it to form an algebra in $U(\g\oplus\g\oplus\bar\g)\Mod^\Phi$, and finally reduce by $\bar\g$ to make $C^\infty(G)$ (with a deformed product) into an associative algebra in $U(\g\oplus\g)\Mod^\Phi$. Pictorially, this construction is as follows:
$$
\begin{tikzpicture}[baseline=-1cm,scale=1]
\coordinate[label=left:{$+$}] (A00) at (-7.5,0) ;
\coordinate[label=right:{$+$}] (B00) at (-5,0) ;
\coordinate[label=below:{$-$}] (C00) at (-6.5,-1.7);
\coordinate[label=below:{$-$}] (C00') at (-6,-1.7);
\filldraw[fill=white!90!black] (A00) .. controls (-6.5,0) and (-6.5,-.7) .. (C00)--cycle;
\filldraw[fill=white!90!black] (B00) .. controls (-6,0) and (-6,-.7) .. (C00')--cycle;
\fill (A00) circle (0.05) (B00) circle (0.05) (C00) circle (0.05) (C00') circle (0.05);
\draw[->,decorate,
     decoration={snake,amplitude=.4mm,segment length=2mm,post length=1mm}] (-4.8,-1) -- (-3.7,-1);

\coordinate[label=left:{$+$}] (A0) at (-3.5,0) ;
\coordinate[label=right:{$+$}] (B0) at (-1.5,0) ;
\coordinate[label=below:{$-$}] (C0) at (-2.5,-1.7);
\filldraw[fill=white!90!black] (A0)--(B0)--(C0)--cycle;
\fill (A0) circle (0.05) (B0) circle (0.05) (C0) circle (0.05);
\path[dotted,-] (C0) edge (-2.5,0);
\draw[->,decorate,
     decoration={snake,amplitude=.4mm,segment length=2mm,post length=1mm}] (-1.3,-1) -- (-.2,-1);

\coordinate[label=left:{$+$}] (A1) at (0,-.5) ;
\coordinate[label=right:{$+$}] (B1) at (2,-.5) ;
\filldraw[fill=white!90!black] (A1) .. controls (.5,-1.5) and (1.5,-1.5) .. (B1)--cycle;
\fill (A1) circle (0.05) (B1) circle (0.05);
\end{tikzpicture}
$$
Here we have labeled the marked points at which $\g$ and $\bar\g$ act by $+$ and $-$ signs (respectively).

A more flexible way of dealing with non-even associators is to use a decomposition $V=V_+\sqcup V_-$ as at the end of Section \ref{sec:moduli}. In this case we just apply fusion to the commutative algebra $C^\infty(G^E)$ in $U(\g^E\oplus\bar\g^E)\Mod^\Phi$ (and, if we added extra marked points to $V_-$, reduce at those extra points to eliminate them).

Finally, to get a quantization of the Poisson manifold $M_{\Sigma,V}(G)/C$, we take the $\mf c$-invariants of the deformed algebra $C^\infty(M_{\Sigma,V}(G))$.

\end{document}